\documentclass{amsart}

\usepackage{amsmath}
\usepackage{graphicx}
\usepackage{hyperref}

\newtheorem{lemma}{Lemma}
\newtheorem{lem}{Lemma}
\newtheorem{remark}{Remark}
\newtheorem{proposition}{Proposition}
\newtheorem{theorem}{Theorem}
\newtheorem{corollary}{Corollary}

\newcommand{\deq}{\stackrel{d}{=}}
\newcommand{\mean}{\mathbb E}
\newcommand{\prb}{\mathbb P}
\newcommand{\hs}{\hat{s}}
\newcommand{\hatt}{\hat{t}}

\begin{document}
\title{Queues and risk models with simultaneous arrivals}

\author{E.S. Badila}
\address{Department of Mathematics and Computer Science, Eindhoven University of Technology, P.O. Box 513, 5600 MB Eindhoven, The Netherlands.} 
\email{e.s.badila@tue.nl}
\author{O.J. Boxma}
\email{boxma@win.tue.nl}
\author{J.A.C. Resing}
\email{resing@win.tue.nl}
\author{E.M.M.\ Winands}
\address{Korteweg de Vries Instituut voor Wiskunde, University of Amsterdam, P.O. Box 94248, 1090 GE Amsterdam, The Netherlands.}
\email{E.M.M.Winands@uva.nl}
\thanks{The authors gratefully acknowledge stimulating discussions with Yifan Xu (Binghamton University).
Serban Badila is being supported by Project 613.001.017 of the Netherlands Organisation for Scientific Research (NWO)}


\begin{abstract}
We focus on a particular connection between queueing and risk models in a multi-dimensional setting.
We first consider the joint workload process in a queueing model with parallel queues and simultaneous arrivals at the queues. For the case that the service times are ordered (from largest in the first queue to smallest in the last queue) we obtain the Laplace-Stieltjes transform of the joint stationary workload distribution.
Using a multivariate duality argument between queueing and risk models, this also gives the Laplace transform of the survival probability of all books in a multivariate risk model with simultaneous claim arrivals and the same ordering between claim sizes.

Other features of the paper include a stochastic decomposition result for the workload vector, and an outline how the two-dimensional risk model with a general two-dimensional claim size distribution (hence without ordering of claim sizes) is related to a known Riemann boundary value problem.
\end{abstract}


\subjclass[2010]{60K25; 91B30}
\keywords{Queues with simultaneous arrivals, workload, stochastic decomposition, duality, multivariate risk model}
\maketitle

\begin{section}{Introduction}
There are several connections between queueing and risk models. A classical
result is that the ruin probability in the Cram\'{e}r-Lundberg risk model,
in which the arrival process of claims is a compound Poisson process,
is related to the workload (or waiting time) in an $M/G/1$ queue with the
same compound Poisson input. More precisely, denoting by $(R_t)_{t\geq 0}$
the surplus process in the Cram\'{e}r-Lundberg risk model, by $\tau$ the
time of ruin of this process and by $(V_t)_{t\geq 0}$ the workload process
in the corresponding $M/G/1$ queue, one has $\prb(\tau \leq t|R_0=u)$
$= \prb(V_t > u|V_0=0)$; in particular, the probability of ruin ever occurring
when starting at $u$ equals the probability that the steady-state workload
exceeds $u$. See, e.g., the nice geometric duality argument on page $46$
of Asmussen and Albrecher \cite{AsmussenRuin},
or Rolski et al. \cite{RSST}.

However, also other ruin-related performance measures have a counterpart
in queueing theory. By interpreting the interarrival times of the claims
as service times of the corresponding queue and the claim sizes as
interarrival times of the queue, the standard Cram\'{e}r-Lundberg model is
translated into a $G/M/1$ queue. The time to ruin in the
Cram\'{e}r-Lundberg model is now related to a busy period of the
corresponding queue, the deficit at ruin to an idle period and the
surplus just before ruin to the sojourn time of the last customer in a
busy period (see Frostig \cite{Frostig} and L\"{o}pker and Perry
\cite{Perry}).

In this paper our focus is on a connection between queueing and risk models
in a multi-dimensional setting. In particular, we look at the joint workload
process in a queueing model with parallel queues and simultaneous arrivals
at the queues. Under the condition that, with probability 1, the service
times of the customers arriving at the same time at the different queues
are ordered (i.e., the customer in queue 1 has the largest service time,
the customer in queue 2 the second largest service time, and so on)
we are able to find the Laplace-Stieltjes transform of the joint stationary
workload distribution in the different queues. Using a multivariate duality argument
between queueing and risk models, this immediately gives the
Laplace transform of the survival (non-ruin) probabilities in a multivariate risk
model with simultaneous claim arrivals (and the same ordering property for
the claim sizes of the simultaneous claims at the different books in the
model).

Queueing models with parallel queues and simultaneous arrivals are also
often called fork-join queues. These models have many applications in
computer-, communication- and production systems in which jobs are split
among a number of different processors, communication channels or machines.
Clearly, the queues in these models are dependent due to the simultaneous
arrivals. In general this makes an exact analysis of the model very hard.
Only in the case of two queues, exact results are available (see, e.g.,
Flatto and Hahn \cite{flattohahn}, Wright \cite{Wright}, Baccelli \cite{Baccelli},
De Klein \cite{DeKlein} and Cohen \cite{Cohen92}). We will come back to some of these exact results
in Section \ref{Section boundary value} of the paper. For the model with more than two servers no
exact analytical results are available in the literature. In this case,
bounds and approximations for several performance measures have been
developed, see e.g. \cite{BaccelliMS, RNANT88, NelsonT}.

Multivariate risk models with simultaneous claim arrivals have several applications in the area of ruin theory.
One example is provided by reinsurance models in which, whenever a claim arrives,
several insurance companies pay a part of the claim. Another example would
be a large insurance company with multiple lines of business, where
correlated claims arrive at the various business lines. Albeit in a
different area of risk management, analysis of the dependence between
the stochastic asset processes of several counter parties is also one of
the most challenging aspects in the field of credit risk. Especially, in
a two-dimensional setting one has to study the joint asset process of an
obligor and a guarantor in credit default swaps.

Avram, Palmowski and Pistorius \cite{APP2, APP1} have studied the joint ruin problem for the
special case of two insurance companies that divide between them both
claims and premia in some specific proportions. In particular, they derive
the double Laplace transform with respect to the two initial reserves
of the survival probabilities of the two companies.
Proportional claims are a special case of our ordered claims, and
we show in Section \ref{Sect rel_other models}
that their survival result indeed is a special case of our Formula (\ref{psi_final}).
One of the key observations in \cite{APP2, APP1} is that, due to the fact that
the companies divide the claims in some specific proportions, the
two-dimensional ruin problem may be viewed as a one-dimensional crossing
problem over a piecewise linear barrier. Badescu, Cheung and Rabehasaina
\cite{Badescu} have extended the two-dimensional model of \cite{APP2, APP1}
by allowing, next to the arrivals of claims for which the two insurers
divide the claim in some specific proportions, also extra arrivals of
claims which are fully paid by one of the insurers (e.g., insurer 1).
They show that under some conditions also in this model the previously
mentioned reduction to a 1-dimensional problem still holds. However, in
\cite{Badescu} the authors do not consider the double Laplace transform
with respect to the two initial reserves of the survival probabilities of
the two companies (their main focus is on the Laplace transform of the
time until ruin of at least one insurer).


The remainder of the paper is organized as follows: In Section \ref{Section duality} we present our model in detail and we provide the multivariate
duality argument. This duality argument allows a translation between results for the queueing model and results for the multivariate risk model.
Section \ref{SectionK_2} is dedicated to
the analysis of the 2-dimensional queueing model with ordered service times. After introducing the assumptions,
we derive the Laplace-Stieltjes transform of the joint
stationary workloads in the two queues and present a decomposition
theorem for the stationary workload in the two queues. In Section \ref{Section K} we extend the results of Section \ref{SectionK_2} to the $K$-dimensional queueing model.
Section \ref{Sect rel_other models} is dedicated to relations to other models. We present connections with tandem and priority queues, but also with a reinsurance problem
with proportional claim sizes. 
In Section \ref{Section boundary value} we discuss the case of a general two-dimensional service time (or claim size) distribution.
We indicate that the two-dimensional workload problem has been solved in the queueing literature.
The solution is very complicated; our ordered service times case is a degenerate case, but a case which has
the advantage of a much more explicit solution which offers more probabilistic insight -- and a case that can be generalized
to higher dimensions.
 Finally, Section \ref{section comments} outlines possible further research directions.

Among the main contributions of our paper, we mention an explicit result for the transform
of the joint workload (respectively, of the joint survival probability) and its extension to the $K$-dimensional model.
In addition, we mention the workload decomposition result. It seems to be new
in this setting, although similar results -- under the assumption of independent inputs -- were obtained for parallel queues (cf. \cite{Kella}).
From a more abstract perspective, another contribution of our paper is that it strengthens the links between
queueing and risk models, pointing out that certain results and methods in the literature (and in the present paper) for queues with simultaneous arrivals
are of immediate use in the risk setting, and vice versa.
\end{section}

\begin{section}{Multivariate Duality\label{Section duality}}

We consider a $K$-dimensional risk process in which claims arrive simultaneously in the K branches, according to a Poisson process with rate $\lambda$.
The claim sizes in the $K$ books are independent, identically distributed random vectors $(B_n^{(1)},...,B_n^{(K)})$, $n\geq 1$. In the sequel we denote with $(B^{(1)},...,B^{(K)})$
a random vector with the same distribution as $(B_1^{(1)},...,B_1^{(K)})$.

For the n$^{th}$ arriving claim vector, denote by $A_n$ the time elapsed since the arrival of the previous claim vector, so that the $A_n$ are independent and have an identical exponential
distribution with parameter $\lambda$.

Let $R_t^{(i)}$, $i=1,...,K$ be $K$ risk reserve processes with initial capitals $u_i$, premium rates $c^{(i)}$ and the \textit{same} arrival instants $\sigma_n$, $n\geq 1$. We have  $A_n=\sigma_n-\sigma_{n-1}$ and $\sigma_0=0$ (no delay). Then

\begin{equation}
R^{(i)}_t=u_i+\sum_{j=1}^{n(t)} (c^{(i)}A_j-B^{(i)}_j)+c^{(i)}\left(t-\sigma_{n(t)}\right),
\label{definerisk}
\end{equation}
where $n(t)$ is the number of arrivals before $t$.
Let $\tau^{(i)}(u_i)=\inf\left\{t>0:R^{(i)}_t<0\right\}$ be the times to ruin.

In connection with the ruin process, we consider $K$ parallel $M/G/1$ queues with simultaneous (coupled) arrivals and correlated service requirements. As in the ruin setting, $A_n$ are the interarrival times of customers in the $K$ queues and the vector $(B^{(1)},...,B^{(K)})$ denotes the generic service requirements.
 The speed of server $i$ is denoted by $c^{(i)}$, meaning
that server $i$ handles $c^{(i)}$ units of work per time unit, $i=1,\dots,K$.

Furthermore we denote by $\rho_i:=\lambda \mean(B^{(i)})$ the load of queue $i$, $i=1,...,K$ and we assume that $\rho_i<c^{(i)}$, to ensure that all queues can handle the offered traffic.
These conditions imply positive safety loading in the ruin setting.


From the queueing perspective, let $(V_t^{(1)},...,V_t^{(K)})$ be the workload vector at time $t$ in the system or, if we consider the $n^{th}$ arrival epoch, this is the workload $(V_n^{(1)},\dots,V_n^{(K)})$ seen by the customers of the $n^{th}$ batch arrival.
Remark that $V_n^{(i)} = c^{(i)} W_n^{(i)}$, with $W_n^{(i)}$ the waiting time of the $n^{th}$
arrival in queue $i$.
Under the stability conditions above, 
the vectors $(V_t^{(1)},...,V_t^{(K)})$ and $(V_n^{(1)},...,V_n^{(K)})$
converge in distribution to the steady-state joint workload 
at arbitrary  epochs and at arrival epochs, respectively. Due to the PASTA property these vectors are equal.
Similarly, the vector $(W_n^{(1)},\dots,W_n^{(K)})$ converges in distribution to the steady state waiting time.
We denote the Laplace-Stieltjes transform (LST) of the steady-state workload vector:
\[\psi(s_1,s_2,...,s_K) := \mean(e^{-s_1V^{(1)}-s_2V^{(2)}-...-s_KV^{(K)}}).\]


For the multidimensional ruin process defined in (\ref{definerisk}), consider a dual workload process with $V^{(i)}_N$, $i=1,...,K$  the workload seen upon arrival by the $N^{th}$ customer in $K$ initially empty queues with the time reverted arrival process (the arrival epochs are the same for all the systems):

\[\sigma^*_n=\sigma_{N-n+1},\;\;\; (A^*_n=A_{N-n+1}), \;\; n=1,...,N;\]
service time of customer $n$ at queue $i$: $B^{*(i)}_n=B^{(i)}_{N-n+1}$, $n=1,...,N$
(time reverted service time) (cf. \cite{AsmussenRuin}).


\vspace{0.2cm}
The following lemma shows that the well-known duality result (cf. \cite{AsmussenRuin}, p. 46) between the Cram\'er-Lundberg model and the $M/G/1$ queue can be extended to the multivariate risk model
and the queueing model with simultaneous arrivals. 
Here the connection is between the various possibilities to be ruined (i.e we may have ruin in all books or precisely in one, at least in one, etc.)
The results below are presented for the case $K=2$, but can be directly extended to the general case.
\begin{lemma}\label{Duality Lemma}
 The following identities  hold:
\begin{itemize}
\item[(a)] $\left\{V^{(1)}_N >u_1 \wedge V^{(2)}_N >u_2 \right\} = \left\{\tau^{(1)}(u_1)\leq\sigma_N \wedge \tau^{(2)}(u_2)\leq\sigma_N\right\}$

\item[(b)] $\left\{V^{(1)}_N \leq u_1 \wedge V^{(2)}_N \leq u_2 \right\} = \left\{\tau^{(1)}(u_1)>\sigma_N \wedge \tau^{(2)}(u_2)>\sigma_N \right\}$

\item[(c)] $\left\{V^{(1)}_N >u_1 \wedge V^{(2)}_N \leq u_2 \right\} = \left\{\tau^{(1)}(u_1)\leq\sigma_N \wedge \tau^{(2)}(u_2)>\sigma_N \right\}$

\item[(d)] $\left\{V^{(1)}_N\leq u_1 \wedge V^{(2)}_N > u_2 \right\} = \left\{\tau^{(1)}(u_1)>\sigma_N \wedge \tau^{(2)}(u_2)\leq \sigma_N  \right\}$

\end{itemize}
The above relations are path-wise identities.
\end{lemma}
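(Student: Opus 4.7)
The plan is to reduce all four multivariate identities to the one-dimensional risk-queue duality (as in Asmussen--Albrecher, p.~46) applied coordinate by coordinate. The key structural observation is that the arrival epochs $\sigma_n$ are \emph{shared} across all $K$ books and queues, so the single time reversal $\sigma^*_n=\sigma_{N-n+1}$, $A^*_n=A_{N-n+1}$ produces a dual queueing system in which the joint workload vector $(V^{(1)}_N,V^{(2)}_N)$ is driven by a common reversed arrival sequence together with the componentwise-reversed service sizes $B^{*(i)}_n=B^{(i)}_{N-n+1}$. On a single probability space the coordinate-wise pathwise identities can then simply be intersected.

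First I would establish, for each fixed $i$, the pathwise identity $\{V^{(i)}_N>u_i\}=\{\tau^{(i)}(u_i)\leq\sigma_N\}$. Iterating the Lindley recursion $V^{(i)}_{n+1}=(V^{(i)}_n+B^{*(i)}_n-c^{(i)}A^*_{n+1})^+$ from the empty state and undoing the reversal $j\mapsto N-j+1$ gives the representation
\[
V^{(i)}_N = \max_{0\leq n\leq N} S^{(i)}_n, \qquad S^{(i)}_n := \sum_{j=1}^{n}\bigl(B^{(i)}_j-c^{(i)}A_j\bigr),
\]
with $S^{(i)}_0=0$. On the risk side, $R^{(i)}_t$ is piecewise linear and increasing between claim epochs and only jumps downward at the $\sigma_n$, so $\inf_{0\leq t\leq\sigma_N}R^{(i)}_t = \min_{0\leq n\leq N}R^{(i)}_{\sigma_n} = u_i - \max_{0\leq n\leq N} S^{(i)}_n$. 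Hence $\tau^{(i)}(u_i)\leq\sigma_N$ iff $\max_n S^{(i)}_n>u_i$, which is precisely $V^{(i)}_N>u_i$; the complementary event yields $\{V^{(i)}_N\leq u_i\}=\{\tau^{(i)}(u_i)>\sigma_N\}$.

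With the two one-dimensional pathwise identities in hand, parts (a)--(d) follow immediately by intersecting the relevant events for $i=1,2$ on the common sample space: (a) is the intersection of two ``$>$'' events, (b) of two ``$\leq$'' events, and (c)--(d) are the two mixed intersections. The extension to general $K$ is verbatim, since the reversal acts jointly on all coordinates.

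The main obstacle is not conceptual but bookkeeping: one must align the Lindley re-indexing of the dual queue with the precise definition $\tau^{(i)}(u_i)=\inf\{t>0:R^{(i)}_t<0\}$, so that the strict versus non-strict inequalities match on both sides and (a)--(b) and (c)--(d) come out as exact pathwise complements, with no arrival epoch double-counted. This is the same delicate index check carried out in the one-dimensional argument of Asmussen--Albrecher, and once it is done in a single coordinate the multivariate statement is essentially automatic.
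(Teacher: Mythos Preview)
Your proposal is correct and follows essentially the same route as the paper: establish the one-dimensional pathwise identity $\{V^{(i)}_N>u_i\}=\{\tau^{(i)}(u_i)\le\sigma_N\}$ for each coordinate, then intersect (or intersect complements) to obtain (a)--(d). The only difference is that the paper simply cites Asmussen--Albrecher for the one-dimensional step, whereas you sketch the Lindley/running-maximum argument explicitly; the multivariate conclusion via intersection is identical.
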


\begin{proof}
The following identities hold for the cylinder sets: \[\{V_N^{(i)}>u_i\}=\{ \tau^{(i)}(u_i)\leq \sigma_N\}. \]
This follows directly from Asmussen and Albrecher (\cite{AsmussenRuin}, page 46) for the 1-dimensional problem, and is a special case of the duality in Siegmund \cite{Siegmund_duality}.





If we intersect the above identities, we obtain $(a)$. $(b)$ follows by intersecting their complements, and $(c)$ and $(d)$ by subtracting $(b)$ and $(a)$ respectively, from the complements of the above cylinder sets.
This concludes the proof.\end{proof}


If we let $N\rightarrow \infty$ in $(b)$ of Lemma \ref{Duality Lemma}, we obtain the infinite horizon joint survival probability

\begin{equation} \lim_{N\rightarrow\infty} \mathbb P(V^{(1)}_N\leq u_1 \wedge V^{(2)}_N\leq u_2)=\mathbb P(\tau^{(1)}(u_1)=\infty\wedge \tau^{(2)}(u_2)=\infty). \label{infinite horizon duality} \end{equation}
Denote the righthand side by $\xi(u_1,u_2)$. This is the joint survival function, for initial capital $(u_1,u_2)$.
By PASTA, we can replace the steady state workload at arrival epochs with the steady state workload at arbitrary epochs in $(\ref{infinite horizon duality})$.



\vspace{0.2cm}

Let \[\xi_*(s,t):=\int e^{-sx_1-tx_2}\xi(x_1,x_2)dx_1dx_2\]
be the Laplace transform (LT) of the joint survival function. Via $(\ref{infinite horizon duality})$, this is also the LT of the c.d.f. of the joint workload in steady state. By a simple integration by parts, we have the following relation with the LST of the workload:
\begin{equation}\xi_*(s,t)=\frac{1}{st}\psi(s,t).\label{LT-LST}\end{equation}
\end{section}

\begin{section}{The analysis of the two-dimensional problem} \label{SectionK_2}
In this section 
we derive the transform of the joint steady state workload process of the two-dimensional queueing model with simultaneous arrivals, as introduced in Section \ref{Section duality} .
We also present a probabilistic interpretation of the quantities involved in the formula of the joint workload. The results are of immediate relevance for the corresponding insurance problem, via
the duality outlined in the previous section.



Before we start with the analysis, we make the following simplifying assumption.
\\
{\bf Assumption 1.}
All premium rates, respectively all service speeds, are $1$, viz., $c^{(1)} = \dots = c^{(K)} =1$.
\\
The following observation shows that this assumption is not restrictive.
If we divide all terms in the righthand side of (\ref{definerisk}) by $c^{(i)}$, we arrive
at a new risk model with initial capital $u_i/c^{(i)}$ and  claim size $B^{(i)}/c^{(i)}$ and \textit{unit} premium rates.
Similarly, in the corresponding queueing model the service times at queue $i$ are also divided by $c^{(i)}$ and the service speeds are equal to 1.
This will not change the $n^{th}$ waiting time $W_n^{(i)}$ at queue $i$, but the
workload $V_n^{(i)}$ at the $n^{th}$ arrival epoch is divided by $c^{(i)}$.
Also the times to ruin are preserved, hence the identities in Lemma~\ref{Duality Lemma} from the previous section remain unchanged.

\noindent The LST of the joint service time/claim size vector is denoted by
\[\phi(s,t) := \mean(e^{-sB^{(1)}-tB^{(2)}}).\]
Our key assumption is the following:
\\
{\bf Assumption 2.}
$\prb(B^{(1)}\geq B^{(2)})=1$.
In view of the above discussion, in the case of speeds $c^{(i)}$ our assumption would be $\prb(B^{(1)}/c^{(1)}\geq B^{(2)}/c^{(2)})=1$.

\begin{remark}\label{rmk 2dedicated}
This model allows for a dedicated Poisson arrival stream into queue 1.
Merging this separate arrival process with the simultaneous arrival process at queue 1,
the distribution of $B^{(2)}$ will have an atom in 0, which is the probability that a
dedicated Poisson arrival happens instead of a simultaneous one (see Badescu et al. \cite{Badescu}
for a reinsurance model with both dedicated and simultaneous arrivals).
\end{remark}

We are interested in the joint stationary distribution of the amount
of work in the two queues 
\[
\psi(s,t) := \mean (e^{-sV^{(1)}-tV^{(2)}}).
\]


This can be obtained in the following way.
Consider the amount of work in queue $i$ just
before the arrival of customer $n$.
We have the following recursion for the random variables $(V^{(1)}_n,V^{(2)}_n), n=1,2,\ldots$
\begin{eqnarray*}
(V^{(1)}_{n+1},V^{(2)}_{n+1}) &=&
({\rm max}(V^{(1)}_n + B^{(1)}_n - A_n,0),{\rm max}(V^{(2)}_n + B^{(2)}_n - A_n,0) ).
\end{eqnarray*}

Or, for the LST
\[
\psi_n(s,t) = \mean \left(e^{-sV^{(1)}_n-tV^{(2)}_n}\right), \quad n=1,2,\ldots,
\]
this gives after straightforward calculations
\begin{eqnarray}
\label{psi_n}
\psi_{n+1}(s,t) &=& {\textstyle \frac{\lambda}{\lambda-s-t}}\left(\phi(s,t)\psi_n(s,t)-
\phi(s,\lambda-s)\psi_n(s,\lambda-s)\right) \nonumber \\
&+&
{\textstyle \frac{\lambda}{\lambda-s}}\left(\phi(s,\lambda-s)\psi_n(s,
\lambda-s)-
\phi(\lambda,0)\psi_n(\lambda,0)\right) \nonumber\\
&+&
\phi(\lambda,0)\psi_n(\lambda,0).
\end{eqnarray}

\noindent Under the stability condition $\rho_1<1$,
$\psi(s,t) := \lim_{n \to \infty} \psi_n(s,t)$ exists and
\begin{eqnarray}
\label{psi}
\left(1-{\textstyle \frac{\lambda \phi(s,t)}{\lambda-s-t}}\right)\psi(s,t) &=&
\left({\textstyle \frac{\lambda}{\lambda-s}-\frac{\lambda}{\lambda-s-t}}\right)
\phi(s,\lambda-s)\psi(s, \lambda-s) \nonumber \\
&+&
\left(1-{\textstyle \frac{\lambda}{\lambda-s}}\right)
\phi(\lambda,0)\psi(\lambda,0).
\end{eqnarray}

\noindent If we let $A$ denote a generic interarrival time, then due to the PASTA property,
\begin{equation}
\label{psi_lambda}
\phi(\lambda,0)\psi(\lambda,0)=\mathbb P(V^{(1)}+B^{(1)}\leq A)= \prb (V^{(1)}=0)= 1- \rho_1.
\end{equation}
This is the probability that queue 1 is empty at an arbitrary time instant.
\vspace{0.2cm}

On the regularity domains of $\psi(s,t)$ and $\phi(s,t)$: We remark that, because of the dependence $\mathbb P(B^{(1)}\geq B^{(2)})=1$, we can rewrite the transform of the joint service times as:

\[\phi(s,t)=\mathbb Ee^{-s(B^{(1)}-B^{(2)})-(s+t)B^{(2)}}=:\tilde\phi(s,s+t),\]

\noindent and this function is always regular in $\mathcal Re\;s>0$, $\mathcal Re(s+t)>0$. If we consider $(B^{(1)},B^{(2)})$ subject to $B^{(1)}\geq B^{(2)}$ a.s., $\phi(s,t)$ may not be regular beyond this domain. More precisely, if $B^{(2)}$ has a heavy-tailed distribution, this implies that $B^{(1)}$ is also heavy tailed because of the dependence structure. In this case $\phi(s,t)$ cannot be extended beyond $\mathcal Re\;s\geq 0$, $\mathcal Re\;(s+t)\geq 0$. Similar considerations hold for $\psi(s,t)$ because we must also have $\mathbb P(V^{(1)}\geq V^{(2)})=1$.

By Lemma \ref{Rouche} in the Appendix, $\forall s$ with $\mathcal Re$ $s>0$, there is a unique $t(s)$, well defined and analytic in $Re\;s>0$, such that
$\lambda \phi(s,t(s)) = \lambda -s-t(s)$. Hence $(s,t(s))$ is a zero of $\left(1-{\textstyle \frac{\lambda \phi(s,t)}{\lambda-s-t}}\right)$  in $(\ref{psi})$, which is in the regularity
domain of $\psi(s,t)$. Then the righthand side of (\ref{psi}) is also zero, i.e.
\begin{equation}\label{intermed_rel}
\lambda t(s) \phi(s,\lambda-s)\psi(s,\lambda-s) = -s(\lambda-t(s)-s)
\phi(\lambda,0)\psi(\lambda,0).
\end{equation}
If we substitute this in (\ref{psi}) and use (\ref{psi_lambda}), we
obtain

\begin{equation}
\label{psi_final}
\psi(s,t) = (1-\rho_1) \frac{s}{s+t-\lambda(1-\phi(s,t))}
\cdot\frac{t(s)-t}{t(s)}.
\end{equation}

\vspace{0.2cm}

\paragraph{\textbf{The interpretation of the Rouch\'e zero $t(s)$.}}

 Assume that a customer that starts a busy period $BP^{(2)}$ in queue 2 demands work $x$ in queue 2 and work $x+y$ in queue 1.
 During the service time of this customer in the second queue, there are Poisson$(\lambda x)$ arriving customers, each one  of these generating an
 i.i.d. busy sub-period with the same distribution as $BP^{(2)}$ in queue 2. So if we denote with $U$ the extra work in the first queue, at the end of a busy period in the second queue,
 and with $U^*(s)$ its Laplace-Stieltjes transform, we have the identity:

\begin{eqnarray}
 U^*(s) &=& \int_{x=0}^{\infty}\int_{y=0}^{\infty}e^{-sy}\sum_{k=0}^\infty \frac{(\lambda x)^k}{k!}e^{-\lambda x}[U^*(s)]^k\; d\mathbb P(B^{(1)}-B^{(2)}\leq y, B^{(2)}\leq x). \nonumber
\end{eqnarray}
 The powers of $U^*(s)$ correspond to the extra work contributions at the end of the busy sub-periods started during the service time of the first customer in the busy period $BP^{(2)}$.
 We can rewrite the above identity as:

 \begin{equation}\label{U_star}U^*(s)=\tilde\phi(s,\lambda [1-U^*(s)])=\phi(s,\lambda[1-U^*(s)]-s).\end{equation}
 Comparing this with the equation in Lemma \ref{Rouche} in terms of $\tilde\phi(s,s+t)$, we have:

\[ \left\{ \begin{array}{l}\lambda\tilde\phi(s,s+t(s))=\lambda-(s+t(s))\\
 \lambda\tilde\phi(s,\lambda[1-U^*(s)])=\lambda U^*(s). \end{array}\right.\]

 We may assume w.l.o.g. that $\prb(B^{(1)}>B^{(2)})>0$, otherwise the two queues are a.s. identical, which is not interesting. Then it follows that the real part of $\lambda(1-U^*(s))$
 is positive, and we must have $s+t(s)=\lambda(1-U^*(s))$
 because  the solution obtained in Lemma \ref{Rouche} is unique in the region $\mathcal Re$ $(s+t)>0$. We have thus proved:
\begin{proposition}\label{Prop_Rouche} The relation between $t(s)$ and the transform of the extra workload in queue 1 at the end of a busy period in the shortest queue is

\begin{equation}\lambda U^*(s)=\lambda-(s+t(s)).\label{rel}\end{equation}
The transform of the joint workload in the two systems becomes

\[\psi(s,t) = (1-\rho_1) \frac{s+t-\lambda(1-U^*(s))}{s+t-\lambda(1-\phi(s,t))}
\cdot\frac{s}{s-\lambda(1-U^*(s))}.\]

\label{t(s)}
\end{proposition}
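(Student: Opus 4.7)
The plan is to collect the identification $s+t(s)=\lambda(1-U^*(s))$ that is already foreshadowed in the discussion preceding the statement, and then reduce the second formula for $\psi(s,t)$ to a direct algebraic substitution in (\ref{psi_final}).

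First I would rigorously verify the fixed-point equation (\ref{U_star}) for $U^*(s)$. Condition on the values $B^{(2)}=x$ and $B^{(1)}-B^{(2)}=y$ of the customer who initiates the busy period $BP^{(2)}$ in queue~2. During that service time of length $x$, a Poisson$(\lambda x)$ number of further customers arrive; each one initiates an independent busy sub-period in queue~2 distributed as $BP^{(2)}$, and the corresponding extra contributions to queue~1 are i.i.d.\ copies of $U$, independent of $(B^{(1)},B^{(2)})$. Summing the Poisson generating function gives
\[
U^*(s)=\mean\bigl(e^{-s(B^{(1)}-B^{(2)})-\lambda(1-U^*(s))B^{(2)}}\bigr)=\tilde\phi\bigl(s,\lambda(1-U^*(s))\bigr),
\]
which is exactly (\ref{U_star}).

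Next I would compare this with the defining equation $\lambda\tilde\phi(s,s+t(s))=\lambda-(s+t(s))$ supplied by Lemma \ref{Rouche}. Both functional equations have the form $\lambda\tilde\phi(s,z)=\lambda-z$, the first with $z=\lambda(1-U^*(s))$ and the second with $z=s+t(s)$. Since $U^*$ is the LST of a nonnegative random variable, $|U^*(s)|\leq 1$ for $\mathcal Re\;s\geq 0$, and the non-degeneracy assumption $\prb(B^{(1)}>B^{(2)})>0$ forces $U$ to be non-trivial so that $\mathcal Re\;\lambda(1-U^*(s))>0$. By the uniqueness part of Lemma \ref{Rouche} on the half-plane $\mathcal Re\;(s+t)>0$, the two values of $z$ must coincide, giving (\ref{rel}).

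Finally, for the second formula I would simply substitute $t(s)=\lambda(1-U^*(s))-s$ into (\ref{psi_final}). The factor $(t(s)-t)/t(s)$ becomes
\[
\frac{\lambda(1-U^*(s))-s-t}{\lambda(1-U^*(s))-s}=\frac{s+t-\lambda(1-U^*(s))}{s-\lambda(1-U^*(s))},
\]
and multiplying by the unchanged factor $s/(s+t-\lambda(1-\phi(s,t)))$ produces the claimed expression. The main obstacle is the first step: turning the heuristic busy-period decomposition into a clean derivation of (\ref{U_star}) requires carefully exploiting the conditional independence of the sub-periods generated during the initial service from the pair $(B^{(1)},B^{(2)})$ of the initiating customer. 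Once (\ref{U_star}) is in hand, the identification via Rouché uniqueness and the final substitution are routine.
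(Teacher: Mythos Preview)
Your proposal is correct and follows essentially the same route as the paper: derive the fixed-point identity (\ref{U_star}) by the busy-period branching argument, match it against the Rouch\'e equation via the uniqueness in Lemma~\ref{Rouche} (using $\prb(B^{(1)}>B^{(2)})>0$ to place $\lambda(1-U^*(s))$ in the right half-plane), and then substitute $t(s)=\lambda(1-U^*(s))-s$ into (\ref{psi_final}). In fact the paper presents the whole argument in the paragraph \emph{before} the proposition and the statement merely records the conclusion, so your write-up is a faithful expansion of exactly that reasoning.
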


\vspace{0.2cm}

\paragraph{\textbf{The workload decomposition.}}
Based on Proposition \ref{Prop_Rouche}, we show that the steady-state workload decomposes into an independent sum
of a modified workload and an additional term,
which represents the steady-state workload in a classical M/G/1 queue.

We start the joint workload process and let it run until the end of each busy period in the queue with the smallest workload. At this random time instant, we remove the extra content in queue 1, which has the largest workload of the two. Let us denote this modified joint workload process as $(\tilde V^{(1)}, V^{(2)})$. 
Then at the arrival instants of customers in the two queues, the recurrence relation holds:

\[(\tilde V_{n+1}^{(1)}, V_{n+1}^{(2)})=\left\{\begin{array}{ll}(\tilde V_{n}^{(1)}+B_n^{(1)}-A_n, V_{n}^{(2)}+B^{(2)}_n-A_n),& \mbox{ if } A_n< V_n^{(2)}+B_n^{(2)} \\
(0,0), & \mbox{ if } A_n\geq V_n^{(2)}+B_n^{(2)}.\end{array}\right.\]
Remark that marginally, the shortest queue evolves unchanged.


\noindent If we have ergodicity then in steady state, the above recurrence becomes:
\[ (\tilde V^{(1)},V^{(2)})\deq \left\{\begin{array}{ll}(\tilde V^{(1)}+B^{(1)}-A,V^{(2)}+B^{(2)}-A), & \mbox{ if } A< V^{(2)}+B^{(2)} \\
(0,0), &\mbox{ if } A\geq V^{(2)}+B^{(2)}.\end{array}\right.  \]

Here and in the following, $\deq$ denotes equality in distribution.
If we rewrite this in terms of LST's, we obtain the following functional equation for $\tilde\psi(s,t):=\mathbb Ee^{-s\tilde V^{(1)}-t V^{(2)}}$:

\[(1-\frac{\lambda\phi(s,t)}{\lambda-s-t})\tilde\psi(s,t)=(1-\rho_2)-\frac{\lambda}{\lambda-s-t}\tilde\psi(s,\lambda-s)\phi(s,\lambda-s),\]
where $1-\rho_2=\prb(V^{(2)}=0)$.

\vspace{0.2cm}

Now follows a similar analysis as for $\psi(s,t)$. We already know from the Rouch\'e problem that $t(s)$ from Lemma \ref{Rouche} is a zero of
$(1-\frac{\lambda\phi(s,t)}{\lambda-s-t})$. We also have $\tilde V^{(1)}\geq V^{(2)}$ a.s. (even if we take out the extra workload at the largest queue at the end of each busy period,
$\tilde V^{(1)}$ is still at least as large as $V^{(2)}$ in the long run), therefore $(s,t(s))$ is in the regularity domain of $\tilde\psi(s,t)$ and therefore,
at the point $(s,t(s))$, the right-hand side of the above identity is equal to zero:

\[ \tilde\psi(s,\lambda-s)\phi(s,\lambda-s)=(1-\rho_2)\frac{\lambda-s-t(s)}{\lambda}.  \]
Substituting back in the original identity, yields:

\begin{equation}\label{psi_tilde}
\tilde\psi(s,t)=(1-\rho_2)\frac{s+t-\lambda(1-\phi(s,t(s)))}{s+t-\lambda(1-\phi(s,t))}.
\end{equation}

This is a 2-dimensional Pollaczek-Khinchine type of representation. From an analytic point of view, the role of the numerator is to cancel the unique pole of the denominator in the
region $\mathcal R$e $(s+t)>0$.

Substitute $t(s)$ from Proposition \ref{t(s)} and $\tilde\psi$ from (\ref{psi_tilde}) into (\ref{psi_final}):

\begin{equation} \psi(s,t)=\frac{1-\rho_1}{1-\rho_2} \frac{s}{s-\lambda [1-U^*(s)]}  \tilde\psi(s,t).  \label{decomp}\end{equation}

We can now state the main result:
\begin{theorem}[Work decomposition]
\label{work decomp}
In steady state, we have the following representation of the joint workload at the two queues as an independent sum:
\[(V^{(1)},V^{(2)})\deq(\tilde V^{(1)},V^{(2)})+(V^{(1),1},0),\] where $V^{(1),1}$ is the workload in an independent,
virtual M/G/1 queue with arrival rate $\lambda$ and service requirements distributed as $U$, the extra workload at the end of a busy period $BP^{(2)}$ in the shortest queue.

\end{theorem}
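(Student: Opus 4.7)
The plan is to read off the decomposition directly from the product factorization in equation (\ref{decomp}), after identifying the first factor as the Pollaczek--Khinchine transform of an auxiliary M/G/1 queue. Starting from
\[
\psi(s,t)=\frac{1-\rho_1}{1-\rho_2}\,\frac{s}{s-\lambda[1-U^{*}(s)]}\,\tilde\psi(s,t),
\]
the goal is to show that the prefactor $\frac{1-\rho_1}{1-\rho_2}\,\frac{s}{s-\lambda[1-U^{*}(s)]}$ is precisely the LST of the steady-state workload in a standard M/G/1 queue with arrival rate $\lambda$ and service requirement distributed as $U$. Together with the fact, already established in the derivation of (\ref{psi_tilde}), that $\tilde\psi(s,t)$ is a bona fide LST of the modified workload vector $(\tilde V^{(1)},V^{(2)})$, the product of transforms then identifies $(V^{(1)},V^{(2)})$ with the independent sum claimed in the theorem.

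The main technical step is to check the correct normalization $(1-\rho_U)$ of the Pollaczek--Khinchine formula for the virtual M/G/1 queue, i.e.\ to verify
\[
1-\lambda\mean(U)=\frac{1-\rho_1}{1-\rho_2}.
\]
Here I would compute $\mean(U)$ directly from the pathwise recursion that motivated (\ref{U_star}): writing $U=(B^{(1)}-B^{(2)})+\sum_{i=1}^{N}U_i$, where $N\mid B^{(2)}$ is Poisson with mean $\lambda B^{(2)}$ and the $U_i$ are i.i.d.\ copies of $U$ independent of $(B^{(1)},B^{(2)},N)$, Wald's identity yields $\mean(U)(1-\rho_2)=\mean(B^{(1)})-\mean(B^{(2)})$, hence $\lambda\mean(U)=(\rho_1-\rho_2)/(1-\rho_2)$ and the desired identity $1-\lambda\mean(U)=(1-\rho_1)/(1-\rho_2)$ follows by arithmetic. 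In particular $\rho_U:=\lambda\mean(U)<1$ under the standing stability assumption $\rho_1<1$, so the virtual M/G/1 queue is itself stable and its Pollaczek--Khinchine formula applies.

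With this identification in hand, the prefactor equals $\mean(e^{-sV^{(1),1}})$ where $V^{(1),1}$ is the steady-state workload of that virtual M/G/1 queue with service law $U$. Since the factor $\mean(e^{-sV^{(1),1}})$ depends only on $s$, the factorization
\[
\mean(e^{-sV^{(1)}-tV^{(2)}})=\mean(e^{-sV^{(1),1}})\cdot\mean(e^{-s\tilde V^{(1)}-tV^{(2)}})
\]
is equivalent, by uniqueness of bivariate LSTs, to the statement that $(V^{(1)},V^{(2)})$ is distributed as the independent sum $(\tilde V^{(1)},V^{(2)})+(V^{(1),1},0)$, which is the assertion of the theorem.

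The only subtlety I anticipate is the bookkeeping around regions of regularity: one must argue that both sides of (\ref{decomp}) are analytic in the relevant half-planes so that the factorization of transforms yields a factorization of distributions, but this is already implicit in the derivations leading to (\ref{psi_final}) and (\ref{psi_tilde}) and requires no new work. No other obstacle presents itself; the proof is essentially an identification of known transforms after the correct bookkeeping of the normalizing constant $1-\rho_U$.
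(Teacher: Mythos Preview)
Your proposal is correct and follows essentially the same route as the paper: both identify the prefactor in (\ref{decomp}) as the Pollaczek--Khinchine transform of the virtual M/G/1 queue and then verify the normalizing constant $1-\lambda\mean(U)=(1-\rho_1)/(1-\rho_2)$. The only cosmetic difference is that the paper obtains $\mean(U)$ by differentiating the fixed-point identity (\ref{U_star}) at $s=0$, whereas you reach the same equation $\mean(U)(1-\rho_2)=\mean(B^{(1)})-\mean(B^{(2)})$ via the branching recursion and Wald's identity; these are equivalent one-line computations.
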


\begin{proof}It suffices to remark that the factor
\[\frac{1-\rho_1}{1-\rho_2} \frac{s}{s-\lambda [1-U^*(s)]}=\mathbb E e^{-sV^{(1),1}}\] in (\ref{decomp}) is the Pollaczek-Khinchine formula for the
transform of the workload in the virtual M/G/1 queue with service time distribution $U$.
This virtual queue is obtained by contracting the busy periods in the initial shortest queue, so that an arrival in the virtual queue happens at the end of this busy period and the interarrival
time is then the idle period in the initial queue, and so is exponentially distributed.

To see that indeed $\frac{1-\rho_1}{1-\rho_2}$ is the atom of $V^{(1),1}$ at 0, differentiate the identity for $U^*(s)$ in (\ref{U_star}):
\[ \mathbb E(U)=-\frac{d}{ds}\phi(s,\lambda(1-U^*(s)-s))_{|s=0}=\mathbb E (B^{(1)}-B^{(2)})+\lambda \mathbb E B^{(1)}\mathbb E(U) \] so that $1-\lambda\mathbb E(U)=\frac{1-\rho_1}{1-\rho_2}$.\end{proof}

\end{section}

\begin{section}{Relation with other models}\label{Sect rel_other models}

In this section we point out how the results of the previous section are related to results for a risk model with proportional reinsurance, a particular
tandem fluid model and with a particular priority queue. We start by showing that (\ref{psi_final})  generalizes a result obtained in \cite{APP1}, for the risk setting.

\paragraph{\textbf{The case of proportional reinsurance.}}
In \cite{APP1} the joint reserve process $(R^{(1)},R^{(2)})$ is of the form: $R^{(i)}(t)=u_i+ c^{(i)} t/\delta_i - S(t)$. Here $S(t)$ is a common Compound Poisson input process with generic claim sizes $\sigma$ and $c^{(i)}$ are the premium rates. The claims are being divided in fixed proportions $\delta_i$, respectively.

To bring this closer to our setting in Section \ref{SectionK_2}, normalize the income rates: i.e. we consider $(\frac{1}{p_1} R^{(1)},\frac{1}{p_2}R^{(2)})$ with $p_i=\frac{c^{(i)}}{\delta_i}$.
The assumption in \cite{APP1} is that $p_1>p_2$, which means that, in our notation, the claim sizes are $B^{(1)}:=\frac{1}{p_1}\sigma<\frac{1}{p_2}\sigma=:B^{(2)}$.
Remark that the inequality between the $B^{(i)}$'s is reversed here (which means the role of the arguments in our transforms is interchanged, especially the Rouch\'e zero).

Let us recall the main formula in \cite{APP1} (Formula (23)):
\begin{equation}
\psi_{*R^{(1)},R^{(2)}}(p,q)= \frac{\kappa_2(0+)'}{p(\kappa_1(p+q) -q(p_1-p_2))} \frac{q+p-q^+(q(p_1-p_2))}{q-q^+(q(p_1-p_2))}. \label{MP}
\end{equation}
The relation between the ruin times of $(R^{(1)},R^{(2)})$ and $(\frac{1}{p_1} R^{(1)},\frac{1}{p_2}R^{(2)})$ is
\[\tau_{\frac{1}{p_1} R^{(1)},\frac{1}{p_2}R^{(2)}}(u_1,u_2)= \tau_{R^{(1)},R^{(2)}}(p_1u_1,p_2u_2).\]
Hence the relation to the LT coordinates used in (\ref{LT-LST}) is $s=p_1p$, $t=p_2q$.
 From this, the relation between the LT of the survival functions becomes after a change of variables:
 \begin{equation}\psi_{*\frac{1}{p_1}R^{(1)},\frac{1}{p_2}R^{(2)}}(s,t)=\frac{1}{p_1p_2}\psi_{*R^{(1)},R^{(2)}}(p,q).\label{LTrel}\end{equation}

\vspace{0.2cm}
\begin{itemize}
 \item{} $\kappa_i(\alpha)$ is the Laplace exponent of the Compound Poisson process with drift $p_i$ per unit time. This means
 \[ \kappa_i(\alpha)= p_i\alpha-\lambda(1-\mean e^{-\alpha\sigma}). \]

Because of the linear dependence between the $B^{(i)}$'s, their LST has the form
$\mean e^{-sB^{(1)}-tB^{(2)}}= \phi(s,t)=:\phi_{B^{(1)}}(s+\frac{p_1}{p_2}t)$.
 \item{} $q^+(q)$ is the largest root of the equation $\kappa_1(\alpha)=q$.
Then $q^+(q(p_1-p_2))$ solves:
\[ p_1\alpha - \lambda(1-\mean e^{-\alpha p_1B^{(1)}})=q(p_1-p_2). \]
Remark that if we set $\alpha=p+q$, the above becomes:
 \[p_1p+p_2q -\lambda(1 - \phi_{B^{(1)}}(p_1p+p_1q) ) =0,  \] or, written in the $(s,t)$-coordinates, this becomes the equation in Lemma \ref{Rouche}
 (with $s$ and $t$ interchanged). Hence the relation between the zeroes in the two notations is: $s(t)=p_1(\alpha-q)=p_1[q^+(q(p_1-p_2))-q]$.

\noindent The constant $\kappa(0+)'=p_2-\lambda\mean B^{(2)}= p_2(1-\rho_2)$ is the probability that the queueing system is empty in steady state (now the second queue has a higher workload).
\end{itemize}

 In conclusion, $(\ref{MP})$ written via (\ref{LTrel}) and (\ref{LT-LST}) in the $(s,t)$  coordinates becomes Formula (\ref{psi_final}):

 \[\psi(t,s) =  \frac{s(1-\rho_2)}{s+t-\lambda(1-\phi_{B^{(1)}}(s+\frac{p_1}{p_2}t))}\cdot\frac{s-s(t)}{-s(t)},\]
with the arguments $s$ and $t$ interchanged.

\paragraph{\textbf{Relation with work on tandem fluid queues.}}
We now show that the workload model with ordered service times is equivalent with
a particular tandem fluid queue. That is a model of two queues in series, in which the
outflow from the first queue is a fluid, i.e., there is continuous outflow when the server is working
(instead of customers leaving one by one). Such tandem fluid queues have been studied by various authors, see in particular
\cite{Kella}.
Consider the following two-station tandem fluid network with {\it independent}
compound Poisson input at the two stations (with arrival rate $\lambda_i$
and Laplace-Stieltjes transform of the service times $B_i^*(\cdot), i=1,2$).
Then Theorem 4.1 of Kella \cite{Kella} gives the Laplace-Stieltjes
transform of the steady-state fluid levels
$W_1$ and $W_2$ in the two nodes:
\begin{equation}
\psi_W(\alpha_1,\alpha_2) = \mean\left(e^{-\alpha_1 W_1 -\alpha_2 W_2}\right) =
\frac{(1-\rho_1-\rho_2)\alpha_2}{\phi_1(\alpha_1)-\phi_1(\hat{\eta}_2(\alpha_2))} \cdot \frac{\alpha_1-\hat{\eta}_2(\alpha_2)}{\alpha_2-\hat{\eta}_2(\alpha_2)},
\end{equation}
with
\begin{itemize}
\item $\rho_i = \lambda_i \mean(B_i)$,
\item $\phi_1(\alpha_1) = \alpha_1 - \eta_1(\alpha_1)$,
\item $\eta_i(\alpha_i) = \lambda_i(1-B_i^*(\alpha_i))$,
\item $\hat{\eta}_2(\alpha_2)$ the solution of $\phi_1(\hat{\eta}_2(\alpha_2))
= \eta_2(\alpha_2)$.
\end{itemize}
Alternatively, the last relation can also be formulated as: $\hat{\eta}_2(\alpha_2)$ is the solution of
\[
\lambda_1 B_1^*(\hat{\eta}_2(\alpha_2)) + \lambda_2 B_2^*(\alpha_2)=
\lambda_1 + \lambda_2 - \hat{\eta}_2(\alpha_2).
\]
This system is related to our model with arrival rate $\lambda= \lambda_1+ \lambda_2$
and Laplace-Stieltjes transform of service requirements
\[
\phi(s,t) = \frac{\lambda_1}{\lambda_1+\lambda_2} B_1^*(s+t)
          + \frac{\lambda_2}{\lambda_1+\lambda_2} B_2^*(s).
\]

\vspace{0.2cm}

\begin{figure}[!htpb]
\caption{Tandem fluid queue}
\label{tandem fluid pic}
\centering
\includegraphics[scale=0.5]{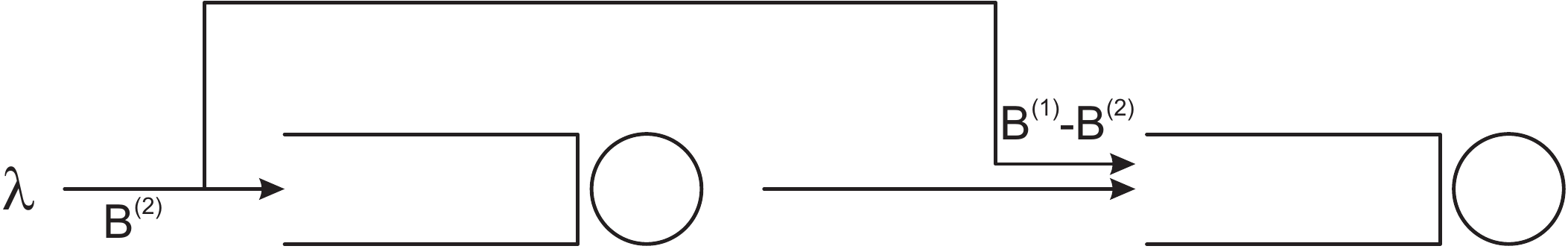}
\end{figure}

The corresponding notation is: $B_1\deq B^{(2)}$ and $B_2\deq B^{(1)}-B^{(2)}$.
Here $W_1$ in the tandem model corresponds to the workload in the smallest
queue in our model and $W_1+W_2$ in the tandem model corresponds to the
workload in the largest queue in our model. So we have
\begin{eqnarray*}
\psi(s,t) &=& \mean\left(e^{-sV_1-tV_2}\right) = \mean\left(e^{-s(W_1+W_2)-tW_1}\right) = \psi_W(s+t,s) \\
&=&
\frac{(1-\rho_1-\rho_2)s}{s+t - \lambda_1(1-B_1^*(s+t))- \lambda_2(1-B_2^*(s))} \cdot \frac{s+t-\hat{\eta}_2(s)}{s-\hat{\eta}_2(s)}.
\end{eqnarray*}
Now remark that
\begin{itemize}
\item The total traffic offered to the largest queue is $\rho_1+\rho_2$,
      so indeed the factor $1-\rho_1-\rho_2$ in \cite{Kella} corresponds to the factor
      $1-\rho_1$ in (\ref{psi_final});
\item $\lambda(1-\phi(s,t)) = \lambda_1(1-B_1^*(s+t)) + \lambda_2(1-B_2^*(s))$;
\item $\lambda \phi(s,t(s)) = \lambda_1 B_1^*(s+t(s)) + \lambda_2 B_2^*(s)
                            = \lambda_1 + \lambda_2 - (s+t(s))$, so indeed
      $\hat{\eta}_2(s)$ corresponds to our $s+t(s)$.
\end{itemize}
We conclude that (\ref{psi_final}) coincides with Theorem 4.1 of
\cite{Kella} in the case of independent compound Poisson input. Kella's
result is more general in the sense that he has L\'{e}vy input instead of
compound Poisson input. Our result is more general in the sense that we
have {\it dependent} compound Poisson input.

\paragraph{\textbf{Relation with work on priority queues.}}
As was already noticed in Kella \cite{Kella}, but also in several other
places in the literature, the tandem fluid network described above is also related to a priority queue with
preemptive resume priorities. Hence the same holds for our workload model. Consider the following model with two types
of customers where customers of type-$i$ arrive according to a Poisson
process with rate $\lambda_i$ having service times with Laplace-Stieltjes
transform $B_i^*(\cdot), i=1,2$. Assume furthermore that customers of
type-$1$ have preemptive resume priority over customers of type-$2$.
If we denote by $Y_1$ and $Y_2$ the steady-state workloads in the two
queues, then $Y_1$ and $Y_2$ are related to $W_1$ and $W_2$ in the tandem
fluid network. The Laplace-Stieltjes transform of the
steady-state workloads in the two queues satisfies
\[
\psi_Y(s,t) = \mean\left(e^{-s Y_1-t Y_2}\right)
              = \mean\left(e^{-s W_1-tW_2}\right)
              = \mean\left(e^{-s V_2-t(V_1-V_2)}\right) = \psi_V(t,s-t)
\]
where again in our model we have to take
arrival rate $\lambda= \lambda_1+ \lambda_2$
and Laplace-Stieltjes transform of service requirements
\[
\phi(s,t) = \frac{\lambda_1}{\lambda_1+\lambda_2} B_1^*(s+t)
          + \frac{\lambda_2}{\lambda_1+\lambda_2} B_2^*(s).
\]
We conclude that (\ref{psi_final}) also gives the
Laplace-Stieltjes transform of a priority queue. Again our result is
more general in the sense that we have {\it dependent} compound
Poisson input (i.e., we can have arrivals of customers who have
both low and high priority work).

\end{section}

\begin{section}{The $K$-dimensional problem\label{Section K}}

\paragraph{} In this section we consider the $K$-queue system with simultaneous arrivals.
We give the transform for the steady-state joint workload and we show that the decomposition in Theorem \ref{work decomp}
extends to this case if we preserve the ordering between the service requirements/claim sizes. We use an iterative argument and for this purpose, the decomposition in Section \ref{SectionK_2}
 will be the starting point; the iteration step is essentially done with the help of Lemma \ref{Work conservation} below as a work conservation identity.

\paragraph{} We thus consider $K$ parallel $M/G/1$ queues, numbered $1$ to $K$, respectively, with simultaneous (coupled)
arrivals and correlated service requirements.
The arrival process is again a Poisson process with rate $\lambda$. The
service requirements of successive customers at the K queues  are independent, identically
distributed random vectors $(B_n^{(1)},\dots,B_n^{(K)}), n\geq 1$.
Let $(B^{(1)},\dots,B^{(K)})$ be a generic random vector with the same distribution
as $(B_1^{(1)},\dots, B_1^{(K)})$. The LST of the service time/claim size vector is denoted by
\[
\phi(s_1,\dots,s_K) := \mean(e^{-s_1B^{(1)}-\dots-s_KB^{(K)}}).
\]
\noindent The essential assumption in the model extends Assumption 2 for the $2$-dimensional problem: 
\[\prb(B^{(1)}\geq B^{(2)}\geq\dots\geq B^{(K)})=1.\]
Furthermore we denote by $\rho_i := \lambda \mean B^{(i)}$, $i=1,\dots,K$,  the load of queue $i$ and
we assume that $\rho_1 < 1$ (hence $\rho_i<1$, $\forall i$), to assure that all queues can handle the offered work.

\begin{remark}
Like in the two-dimensional case (cf. Remark \ref{rmk 2dedicated}), this model allows for a separate Poisson arrival stream into queue 1.
Merging this separate arrival process with the simultaneous arrival process,
the distribution of $(B^{(2)},\dots,B^{(K)})$ will have an atom in (0,\dots,0), which is the probability that a
dedicated Poisson arrival happens instead of a simultaneous one.
\newline Similarly, the model allows for simultaneous arrivals at the first $j$ queues only. This can be achieved by letting the distribution of $(B^{(j+1)},...,B^{(K)})$ have an atom at $(0,...,0)$.
\end{remark}

\paragraph{\textbf{The Laplace-Stieltjes transform of $(V^{(1)},\dots,V^{(K)})$}.}
Denote the Laplace-Stieltjes transform of the service time/claim size vector by
\[\phi(s_1,\dots,s_K) := \mean(e^{-s_1B^{(1)}-\dots-s_KB^{(K)}}).\]
We have the $K$-dimensional Lindley recursion for the random variables $(V^{(1)}_n,\dots,V^{(K)}_n):$
\begin{align*}
(V^{(1)}_{n+1},\dots,V^{(K)}_{n+1}) &=
({\rm max}(V^{(1)}_n + B^{(1)}_n - A_n,0),\dots,{\rm max}(V^{(K)}_n + B^{(K)}_n - A_n,0) )\;\;n\geq 1.
\end{align*}

For the LST:
\[
\psi_n(s_1,\dots,s_K) = \mean\left(e^{-s_1V^{(1)}_n-\dots-s_KV^{(K)}_n}\right), \quad n\geq 1,
\]
the Lindley recursion gives after straightforward calculations:
\begin{align}
\label{psi_nK}
\psi_{n+1}(s_1,\dots,s_K) &=  \sum_{j=1}^{K} \textstyle{\frac{\lambda}{\lambda-\sum_{i=1}^{j}s_i}\left[\phi^{(j)}(s_1,\dots,s_j)
\psi_n^{(j)}(s_1,\dots,s_j)\right.}\nonumber\\
&-
\left.\phi^{(j-1)}(s_1,\dots,s_{j-1})\psi_n^{(j-1)}(s_1,\dots,s_{j-1})\right] + \phi^{(0)}\psi_n^{(0)}
\end{align}

\noindent where we used the following notation for simplicity: $\psi_n^{(K)}(s_1,\dots,s_K):=\psi_n(s_1,\dots,s_K)$ and $\psi_n^{(0)}:=\psi_n(\lambda,0,\dots,0)$, and

\begin{align}\psi_n^{(j)}(s_1,\dots,s_j)&:=\psi_n(s_1,\dots,s_j,\lambda-\sum_{i=1}^js_i,\underbrace{0,\dots,0}_{K-j-1\;arguments}),\mbox{ for }1\leq j\leq K-1.\nonumber
\end{align}
$\phi^{(j)}(s_1,\dots,s_j)$ is analogously defined for $j=0,\dots,K$.
By taking $n\rightarrow\infty$ in (\ref{psi_nK}), we obtain for
$\psi(s_1,\dots,s_K) := \lim_{n \to \infty} \psi_n(s_1,\dots,s_K)$,
\begin{align}
\label{psiK}
\left(1-{\textstyle \frac{\lambda \phi(s_1,\dots,s_K)}{\lambda-\sum_{i=1}^Ks_i}}\right)\psi(s_1,\dots,s_K) &=
\sum_{j=0}^{K-1}\left({\textstyle \frac{\lambda}{\lambda-\sum_{i=1}^js_i}-\frac{\lambda}{\lambda-\sum_{i=1}^{j+1}s_i}}\right)\nonumber\\
&\cdot\phi^{(j)}(s_1,\dots,s_j)\psi^{(j)}(s_1,\dots,s_j),
\end{align}
with $\psi^{(j)}:=\displaystyle\lim_{n \rightarrow \infty}\psi_n^{(j)}$; and $\phi^{(0)}\psi^{(0)}=\mathbb P(V^{(1)}+B^{(1)}\leq A)=1- \rho_1 $.

Formula (\ref{psiK}) has a simple recursive structure, and we can rewrite it as:

\begin{align}  \left(1-{\textstyle \frac{\lambda \phi(s_1,\dots,s_K)}{\lambda-\sum_{i=1}^Ks_i}}\right)\psi(s_1,\dots,s_K)=\left({\textstyle \frac{\lambda}{\lambda-\sum_{i=1}^{K-1}s_i}-
\frac{\lambda}{\lambda-\sum_{i=1}^{K}s_i}}\right)\phi^{(K-1)}(s_1,\dots,s_{K-1})\cdot \nonumber \\ \psi^{(K-1)}(s_1,\dots,s_{K-1})
+ \left(1-{\textstyle \frac{\lambda \phi(s_1,...,s_{K-1},0)}{\lambda-\sum_{i=1}^{K-1}s_i}}\right)\psi(s_1,\dots,s_{K-1},0). \label{psiKrec}\end{align}

Denote by $C_j:=\left(1-{\textstyle \frac{\lambda \phi(s_1,\dots,s_j,0,\dots,0)}{\lambda-\sum_{i=1}^{j}s_i}}\right)\psi(s_1,\dots,s_j,0,\dots,0)$, and remark that\newline $\psi(s_1,\dots,s_j,0,\dots,0)$
is the transform of the workload in the $j$-dimensional system obtained by ignoring the last $(K-j)$ queues, $j=1,\dots,K$.


\begin{proposition}\label{Prop work_K} The LST of the steady-state workload in the $K\geq 3$ systems is given by:
\begin{equation}
\psi(s_1,\dots,s_K)=\frac{(1-\rho_K)(S_K-s_K)}{\sum_{i=1}^Ks_i-\lambda(1-\phi(s_1,\dots,s_K))}\prod_{j=2}^{K-1}\frac{1-\rho_j}{1-\rho_{j+1}}
\frac{S_j-s_j}{S_{j+1}}
\cdot\frac{1-\rho_1}{1-\rho_2}\frac{s_1}{S_2},
\label{Work_K}
\end{equation}
with $S_j=S_j(s_1,...,s_{j-1})$ the unique solution of the equation \[\lambda\phi(s_1,\dots,s_j,0,\dots,0)=\lambda-\sum_{i=1}^{j}s_i,\] with $\mathcal Re$ $(s_1+\dots+s_{j-1}+S_j(s_1,\dots,s_{j-1}))>0$,
for all $j=2,\dots,K$.

\end{proposition}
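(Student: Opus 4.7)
The plan is to prove (\ref{Work_K}) by induction on $K$, with base case $K=2$. When $K=2$ the product $\prod_{j=2}^{K-1}$ is empty, and after the telescoping cancellation $\frac{1-\rho_2}{1-\rho_2}$ the formula (\ref{Work_K}) reduces to $\frac{(1-\rho_1)s_1(S_2-s_2)}{S_2(s_1+s_2-\lambda(1-\phi))}$, which is precisely (\ref{psi_final}); so the base case is in hand. The inductive step will extract $\psi(s_1,\ldots,s_K)$ from the recursion (\ref{psiKrec}) by a Rouch\'e substitution, and then evaluate the remainder $C_{K-1}$ using the $(K-1)$-dimensional formula.

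For the inductive step I would first note that $\psi(s_1,\ldots,s_{K-1},0)$ is the joint workload LST of the first $K-1$ queues: setting $s_K=0$ integrates out $V^{(K)}$, and because the Lindley recursion for $(V^{(1)},\ldots,V^{(K-1)})$ does not involve $V^{(K)}$, this marginal coincides with the steady-state transform of the $(K-1)$-dimensional analogue. Since $S_2,\ldots,S_{K-1}$ depend only on $\phi(\cdot,\ldots,\cdot,0,\ldots,0)$, they are identical in both systems and the inductive hypothesis applies verbatim to $\psi(s_1,\ldots,s_{K-1},0)$. Next, Lemma \ref{Rouche} applied to the $K$-variable service LST produces the analytic function $S_K(s_1,\ldots,s_{K-1})$ with $\lambda\phi(s_1,\ldots,s_{K-1},S_K)=\lambda-\sum_{i=1}^{K-1}s_i-S_K$. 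In (\ref{psiKrec}) only the quotient $\lambda/(\lambda-\sum_{i=1}^{K}s_i)$ depends on $s_K$; setting $s_K=S_K$ annihilates the LHS and leaves a single linear equation that solves for $\phi^{(K-1)}\psi^{(K-1)}$ in terms of $C_{K-1}$. Substituting that expression back into (\ref{psiKrec}) at arbitrary $s_K$ and collecting the resulting partial fractions yields
\[\psi(s_1,\ldots,s_K)=\frac{-C_{K-1}\bigl(\lambda-\sum_{i=1}^{K-1}s_i\bigr)(S_K-s_K)}{S_K\bigl(\sum_{i=1}^{K}s_i-\lambda(1-\phi(s_1,\ldots,s_K))\bigr)}.\]
To close, expand $C_{K-1}=\bigl(1-\lambda\phi(s_1,\ldots,s_{K-1},0)/(\lambda-\sum_{i=1}^{K-1}s_i)\bigr)\psi(s_1,\ldots,s_{K-1},0)$ and substitute the inductive formula for the second factor: the denominator $\sum_{i=1}^{K-1}s_i-\lambda(1-\phi(\cdot,0))$ of that formula cancels against the factor in front, and a single telescoping step absorbs the leftover $(1-\rho_{K-1})(S_{K-1}-s_{K-1})/S_K$ as the $j=K-1$ term of the product, converting its range from $\{2,\ldots,K-2\}$ into $\{2,\ldots,K-1\}$ and bringing out the prefactor $(1-\rho_K)$; this is exactly (\ref{Work_K}).

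The main subtle point is justifying that the point $(s_1,\ldots,s_{K-1},S_K)$ lies in the joint analyticity domain of $\psi$, so that substituting $s_K=S_K$ in (\ref{psiKrec}) is legitimate. This parallels the discussion in Section \ref{SectionK_2} following (\ref{psi_lambda}): the a.s.\ ordering $B^{(K-1)}\geq B^{(K)}$ (and more generally the full chain $B^{(1)}\geq\cdots\geq B^{(K)}$) lets one re-parameterise $\phi$ using the nonnegative increments $B^{(i)}-B^{(i+1)}$, which gives analyticity of $\phi$ in the half-space $\text{Re}\sum_{i=1}^{K}s_i>0$; the mirror ordering $V^{(K-1)}\geq V^{(K)}$ transfers the same regularity to $\psi$, and the Rouch\'e zero $S_K$ lives precisely in that half-space. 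Once this analyticity is established, the remainder of the argument is routine partial-fraction manipulation and telescoping as sketched above.
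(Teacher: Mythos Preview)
Your proposal is correct and follows essentially the same route as the paper: both arguments substitute the Rouch\'e zero $s_K=S_K$ into (\ref{psiKrec}) to eliminate the unknown $\phi^{(K-1)}\psi^{(K-1)}$, obtaining a one-step recursion $C_K=\frac{\lambda-\sum_{i<K}s_i}{\lambda-\sum_{i\leq K}s_i}\cdot\frac{S_K-s_K}{S_K}\,C_{K-1}$ that is then unwound from the two-dimensional base case (\ref{psi_final}). Your framing as an induction and your explicit justification of the analyticity of $\psi$ at $(s_1,\ldots,s_{K-1},S_K)$ via the a.s.\ ordering of the $V^{(i)}$ are if anything slightly more detailed than the paper's own proof.
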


\begin{proof}
 The key remark is that $s_K$ is not among the arguments of the functions $\psi^{(j)}$ that appear in the righthand side of (\ref{psiK}).

From Lemma \ref{Rouche} applied to $s=s_1+\dots+s_{K-1}$ and $t=s_K$, there
exists a unique solution $S_K=S_K(s_1,\dots,s_{K-1})$ of the equation \[\lambda\phi(s_1,\dots,s_K)=\lambda-\displaystyle\sum_{i=1}^Ks_i,\]
such that $S_K(s_1,\dots,s_{K-1})+\sum_{i=1}^{K-1}s_i$ has positive real part.
Hence the hyper-surface given by $S_K=S_K(s_1,\dots,s_{K-1})$ is contained in the regularity domain of $\psi(s_1,\dots,s_K)$, and then
the righthand side of (\ref{psiKrec}) must be zero. This gives the following relation for $\psi^{(K-1)}(s_1,\dots,s_{K-1})$:   

\[ \phi^{(K-1)}(s_1,\dots,s_{K-1})\psi^{(K-1)}(s_1,\dots,s_{K-1})= \frac{(\lambda-\sum_{i=1}^{K-1}s_i)\phi(s_1,\dots,s_{K-1},S_K)}{ S_K}C_{K-1}. \]


By substituting back into Equation $(\ref{psiKrec})$, 
we obtain the recursion

\[ C_K=\frac{\lambda-\sum_{i=1}^{K-1}s_i}{\lambda-\sum_{i=1}^Ks_i}\cdot\frac{S_K-s_K}{S_K} C_{K-1}  \] with initial condition
$C_2=-(1-\rho_1)\frac{s_1}{\lambda-s_1-s_2}\frac{S_2-s_2}{S_2}$, which follows from $(\ref{psi_final})$. From this, the formula in $(\ref{Work_K})$ is obtained,
after rearranging the factors.\end{proof}


\vspace{0.2cm}

\paragraph{\textbf{Interpretation of the Rouch\'e root.}}

It is worthwhile to change the coordinates: $(s_1,s_2,\dots,s_K)\rightarrow (s_1,s_2,\dots,s_{K-1},\sum_{i=1}^Ks_i)$. We can rewrite

\[\phi(s_1,\dots,s_K)=\mathbb E e^{-s_1(B^{(1)}-B^{(K)})-...-s_{K-1}(B^{(K-1)}-B^{(K)})-\left(\sum_{i=1}^Ks_i\right)B^{(K)}}\]

Let us denote it by $\tilde\phi(s_1,\dots,s_{K-1},\sum_{i=1}^K s_i)$. This is the transform of the extra service time (relative to the shortest queue) in the first $K-1$ queues,
together with the shortest one.
It turns out there is a connection between $s_K(s_1,\dots,s_{K-1})$ and the joint extra work in systems 1 to $K-1$ at the end of a busy period in system $K$.
Let us denote this extra work by $(U_1,U_2,\dots,U_{K-1})$, with LST $U_{K}^*(s_1,\dots,s_{K-1})$, and let $F(x_1,x_2,\dots,x_{K})$ be the
multivariate c.d.f. of \newline $(B^{(1)}-B^{(K)},\dots,B^{(K-1)}-B^{(K)},B^{(K)})$. Then by a similar argument as the one leading to formula (\ref{U_star}), $U_{K}^*(s_1,\dots,s_{K-1})$ satisfies the identity

\begin{align}\label{branching_K}U_K^*(s_1,\dots,s_{K-1})&=\displaystyle{\int}e^{-\sum\limits_{i=1}^{K-1}s_ix_i}
\sum_{n=0}^\infty \frac{(\lambda x_K)^n}{n!}e^{-\lambda x_K}[U_K^*(s_1,\dots,s_{K-1})]^n F(dx_1\dots dx_K) \nonumber
\\ &=\tilde\phi(s_1,\dots,s_{K-1},\lambda[1-U_K^*(s_1,\dots,s_{K-1})]).\end{align}
Comparing this with the identity for the Rouch\'e root
 \[ \lambda -(s_1+\dots+s_{K-1}+S_K)=\lambda \tilde\phi(s_1,\dots,s_{K-1},s_1+\dots+s_{K-1}+S_K),\]
gives the relation analogous to (\ref{rel}) in  Proposition 1

\begin{equation}\label{Rouche_relK}\lambda U_K^*(s_1,\dots,s_{K-1})=\lambda-(s_1+\dots+s_{K-1}+S_K),\end{equation}
which follows because the Rouch\'e root is unique.



\vspace{0.2cm}

Let us fix our attention on the case $K=3$ for the moment. Then identity $(\ref{Work_K})$ becomes

\begin{align}\psi(s_1,s_2,s_3)&=\frac{(1-\rho_3)(S_3-s_3)}{s_1+s_2+s_3-\lambda[1-\phi(s_1,s_2,s_3)]}\cdot \frac{1-\rho_2}{1-\rho_3}\frac{S_2-s_2}{S_3}
 \cdot\frac{1-\rho_1}{1-\rho_2}\frac{s_1}{S_2}. \label{preservation work}\end{align}

\begin{figure}[!htb]
\centering
\includegraphics[scale=0.7,trim=3cm 10cm 3cm 2cm]{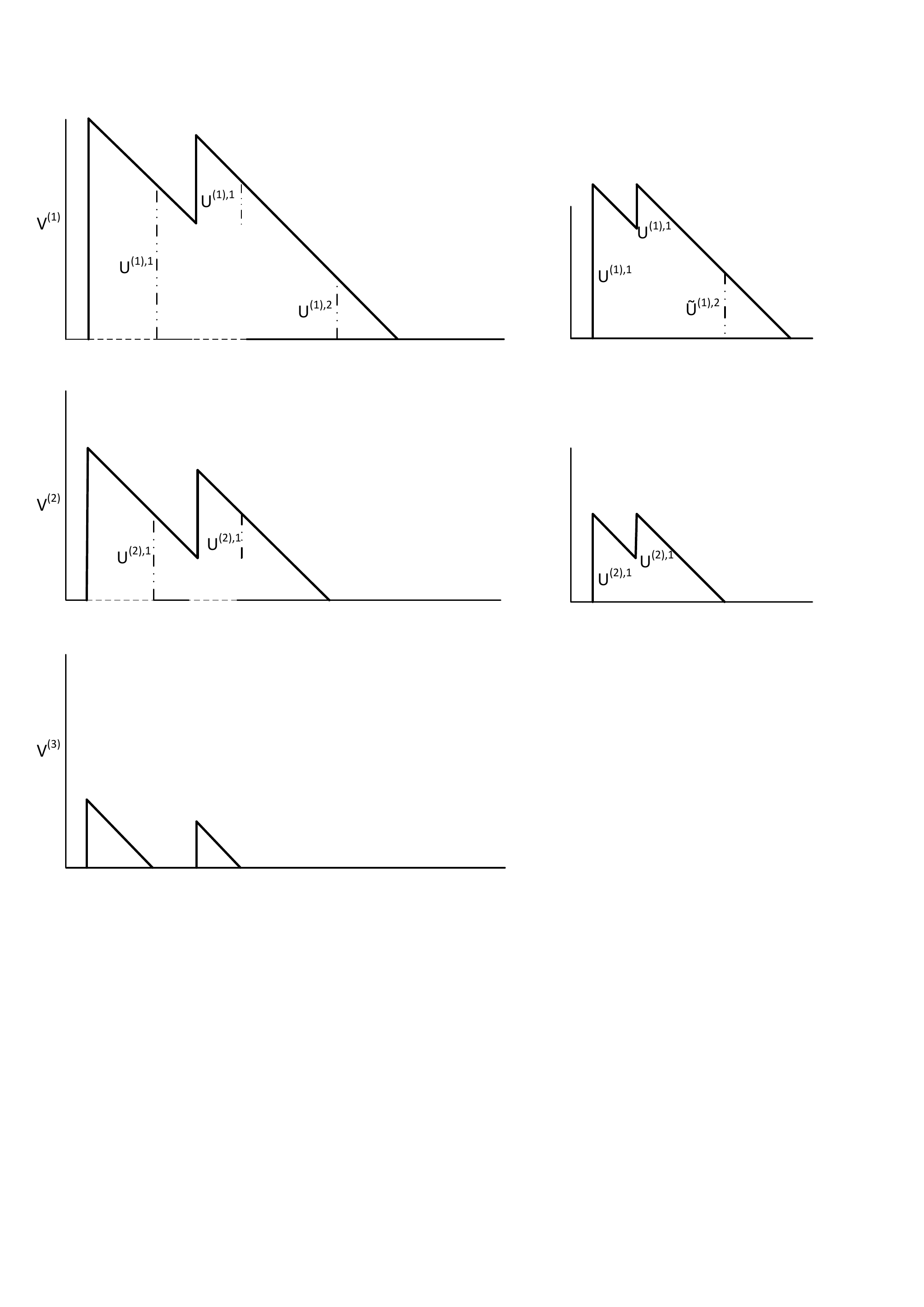}
\caption{Work in the original system (left) and in the virtual system (right)}
\label{Fig work pres}
\end{figure}

\begin{paragraph}{\textbf{Work conservation.}}
We would like to give a probabilistic interpretation of (\ref{preservation work}). In order to achieve this, we start by considering the joint extra work in queues 1 and 2 at the end of a busy period
in queue 3. This has LST $U_3^*(s_1,s_2)$ as input in a 2-dimensional system with simultaneous Poisson arrivals, which is obtained by contracting the busy cycles in queue 3. We call this the 2-dimensional virtual system. Remark that the inter-arrival times in the virtual system are precisely the idle periods in queue 3.

For this construction, the key observation is that the steady-state extra work in the virtual queue 1 at the end of the busy period in the virtual queue 2
is the same as the extra work in the initial queue 1 at the end of the busy period in the original queue 2.
In analytic form, let $\tilde U_2^*(s_1)$ be the LST of the extra work in the virtual system and $ U^*_2(s_1)$ be the LST of the extra work in the original system, see Figure \ref{Fig work pres}.

\begin{lemma}\label{Work conservation}

\[   \tilde U_2^*(s_1) =  U^*_2(s_1). \]

\end{lemma}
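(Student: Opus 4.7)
My plan is to show that both $U_2^*(s_1)$ and $\tilde U_2^*(s_1)$ satisfy the same branching-type fixed-point equation and then to invoke uniqueness, in the spirit of Lemma~\ref{Rouche}. Both quantities are LSTs of the ``extra work in queue~1 at the end of a busy period in queue~2'' in two related two-dimensional systems, so a common recursion is natural.

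For $U_2^*$ I would argue directly in the original three-queue model, exactly as in the derivation of (\ref{U_star}): a busy period of queue~2 is started by a customer bringing $B^{(1)}$ to queue~1 and $B^{(2)}$ to queue~2, and during the service of $B^{(2)}$ a Poisson$(\lambda B^{(2)})$ number of arrivals each triggers an i.i.d.\ sub-busy period. The contribution $B^{(3)}$ to queue~3 is irrelevant for this bookkeeping, which is why only the marginal LST $\phi(s_1,\cdot,0)$ appears. This gives
\[
U_2^*(s_1)\;=\;\phi\bigl(s_1,\,\lambda(1-U_2^*(s_1))-s_1,\,0\bigr).
\]
For $\tilde U_2^*$, the analogous branching picture in the virtual system has a virtual customer with service requirement $U_3^{(2)}$ in virtual queue~2 and extra amount $U_3^{(1)}-U_3^{(2)}\geq 0$ in virtual queue~1, while virtual sub-busy-periods are triggered at rate $\lambda$ because the virtual interarrival times are exactly the exponential idle periods of queue~3. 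This yields
\[
\tilde U_2^*(s_1)\;=\;U_3^*\bigl(s_1,\,\lambda(1-\tilde U_2^*(s_1))-s_1\bigr).
\]
Setting $s_2:=\lambda(1-\tilde U_2^*(s_1))-s_1$ and substituting (\ref{branching_K}) for $K=3$, the third argument $\lambda(1-U_3^*(s_1,s_2))-s_1-s_2$ collapses to $0$, hence $\tilde U_2^*(s_1)=\phi(s_1,\lambda(1-\tilde U_2^*(s_1))-s_1,0)$. This is precisely the equation for $U_2^*$, and since both are LSTs of nonnegative random variables, uniqueness of the regular solution (Lemma~\ref{Rouche} applied to the marginal two-dimensional system, which still satisfies $B^{(1)}\geq B^{(2)}$ a.s.)\ forces $\tilde U_2^*\equiv U_2^*$.

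The main obstacle is the bookkeeping rather than the algebra: one must verify that the virtual system really inherits the ordered-service-time property $U_3^{(1)}\geq U_3^{(2)}$ a.s.\ (which follows from $B^{(1)}\geq B^{(2)}\geq B^{(3)}$ by induction on the branching representation (\ref{branching_K})), and that during a virtual service of length $U_3^{(2)}$ the virtual Poisson clock of rate $\lambda$ really corresponds to ``one sub-busy period per virtual arrival.'' Once these structural facts are secured, the functional-equation collapse in the previous paragraph is a direct consequence of (\ref{branching_K}) and the claim follows.
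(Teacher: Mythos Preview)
Your proposal is correct and follows essentially the same route as the paper: derive the branching equation $\tilde U_2^*(s_1)=U_3^*\bigl(s_1,\lambda(1-\tilde U_2^*(s_1))-s_1\bigr)$ for the virtual system (using $U^{(1),1}\geq U^{(2),1}$ a.s.), substitute the fixed-point identity (\ref{branching_K}) for $U_3^*$ so that the third argument of $\phi$ collapses to $0$, observe that the resulting equation $\tilde U_2^*(s_1)=\phi\bigl(s_1,\lambda(1-\tilde U_2^*(s_1))-s_1,0\bigr)$ is exactly (\ref{U_star}) for $U_2^*$, and conclude by the uniqueness in Lemma~\ref{Rouche}. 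The paper's proof is the same argument, only slightly more terse on the bookkeeping points you flag in your final paragraph.
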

\begin{proof} We begin by remarking that the extra work $(U^{(1),1},U^{(2),1})$ in the first 2 queues at the end of a busy period
in queue 3 satisfies the a.s. inequality $U^{(1),1}\geq U^{(2),1}$. Since this is the input in the virtual system, from Proposition \ref{Prop_Rouche}, $\tilde U_2^*(s_1)$ satisfies the identity (\ref{U_star})
with $U_3^*(s_1,s_2)$ instead of $\phi(s_1,s_2)$:
 \begin{equation}\label{U_3^*}U^*_3(s_1,\lambda[1-\tilde U_2^*(s_1)]-s_1)=\tilde U_2^*(s_1).\end{equation}
\noindent At the same time, via (\ref{branching_K}),  $U_3^*(s_1,s_2)$ satisfies

\[\phi(s_1,s_2,\lambda(1-U_3^*(s_1,s_2))-s_1-s_2)=U_3^*(s_1,s_2).\]
If we substitute this fixed point identity in (\ref{U_3^*}) above, we have

\[ \phi(s_1,\lambda(1-\tilde U_2^*(s_1))-s_1,0)=\tilde U_2^*(s_1). \]

\noindent On the other hand, this is also the identity (\ref{U_star}) satisfied by $U_2^*(s_1)$, in the 2-dimensional system obtained by ignoring the last queue.
Hence, from the uniqueness result in Lemma \ref{Rouche}, $\tilde U_2^*(s_1)=U_2^*(s_1)$ (See Figure \ref{Fig work pres}). This completes the proof.\end{proof}

\noindent We can rewrite $(19)$ using $(21)$:

\begin{align} \label{psi_final3} \psi(s_1,s_2,s_3)&=(1-\rho_3)\frac{s_1+s_2+s_3-\lambda(1-U^*_3(s_1,s_2))}{s_1+s_2+s_3-\lambda(1-\phi(s_1,s_2,s_3))}
\nonumber\\ &\cdot \frac{1-\rho_2}{1-\rho_3}\frac{s_1+s_2-\lambda(1-U_2^*(s_1))}{s_1+s_2-\lambda(1-U_3^*(s_1,s_2))}\cdot \frac{1-\rho_1}{1-\rho_2}\frac{s_1}{s_1-\lambda(1-\tilde U_2^*(s_1))}.\end{align}

Remark that the atom $\frac{1-\rho_1}{1-\rho_2}$ above is the conditional probability that queue 1 is empty, given that queue 2 is empty; and similarly for $\frac{1-\rho_2}{1-\rho_3}$.
In addition, the last factor in (\ref{psi_final3}) is the Pollaczek-Khinchine representation for an M/G/1 queue with service times having LST $\tilde U_2^*(s_1)$.
Now we are ready to give the main result of this section.
\end{paragraph}
\begin{theorem} In steady state, the joint workload distribution decomposes as an independent sum:

\begin{align}(V^{(1)},V^{(2)},V^{(3)})&\deq(\tilde V^{(1),1},\tilde V^{(2),1},V^{(3)})+ (\tilde V^{(1),2}, V^{(2),2},0) 
+ (V^{(1),3},0,0).\nonumber\end{align}
The first term in the sum represents the steady-state distribution of the modified joint workload process obtained by removing the extra work in the first two queues at the end of
a busy period in the third queue. The second term is the workload in the first two queues obtained by removing the extra work in the first queue at the end of a
busy cycle in the second queue. Finally the third term represents the workload in the virtual M/G/1 queue with input distributed as the extra work in queue 1, at the end of a busy period in queue 2.

\end{theorem}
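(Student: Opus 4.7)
The plan is to read the decomposition directly from (\ref{psi_final3}): that identity already presents $\psi(s_1,s_2,s_3)$ as a product of three transforms, the first depending on $(s_1,s_2,s_3)$, the second only on $(s_1,s_2)$, and the third only on $s_1$. Since the LST of a sum of independent random vectors is the product of their LSTs, and the LST determines the joint law, it suffices to identify each of the three factors as the LST of the corresponding summand in the theorem. The coordinate-block structure of the summands -- $(\ast,\ast,\ast)$, $(\ast,\ast,0)$ and $(\ast,0,0)$ -- matches exactly the dependence pattern of the three factors on $(s_1,s_2,s_3)$, and the product of the three constant atoms $(1-\rho_3)\cdot\frac{1-\rho_2}{1-\rho_3}\cdot\frac{1-\rho_1}{1-\rho_2}=1-\rho_1$ recovers the correct $\prb(V^{(1)}=V^{(2)}=V^{(3)}=0)$.

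For the first factor, I would define the modified triple $(\tilde V^{(1),1}_n,\tilde V^{(2),1}_n,V^{(3)}_n)$ by a Lindley-type recursion in which the first two coordinates are reset to $0$ at each end of a busy period in queue 3, while queue 3 itself evolves unchanged. Taking the LST, using ergodicity, and substituting the Rouch\'e root $S_3(s_1,s_2)$ -- which lies in the regularity domain of the modified LST because the ordering $\tilde V^{(1),1}\geq \tilde V^{(2),1}\geq V^{(3)}$ is preserved almost surely -- yields the first factor, in direct analogy with the derivation of (\ref{psi_tilde}). For the second factor I would apply the two-dimensional identity (\ref{psi_tilde}) to the virtual 2D system whose arrival epochs are the ends of busy periods in queue 3; its inter-arrival times are the corresponding idle periods (exponential at rate $\lambda$ by memorylessness of the Poisson process), and its service requirement vectors have LST $U_3^*(s_1,s_2)$. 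Lemma \ref{Work conservation} then replaces the Rouch\'e root of the virtual system with $U_2^*(s_1)$, producing exactly the second factor. The third factor is immediately the Pollaczek-Khinchine transform of an M/G/1 queue with arrival rate $\lambda$ and service-time LST $\tilde U_2^*(s_1)=U_2^*(s_1)$; the atom $\frac{1-\rho_1}{1-\rho_2}$ at $0$ follows by differentiating (\ref{U_star}) at $s=0$, exactly as at the end of the proof of Theorem \ref{work decomp}.

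The main obstacle is the rigorous construction of the modified triple $(\tilde V^{(1),1},\tilde V^{(2),1},V^{(3)})$: one must show that the reset rule preserves the marginal distribution of $V^{(3)}$ and maintains the ordering $\tilde V^{(1),1}\geq \tilde V^{(2),1}\geq V^{(3)}$ needed to place $S_3(s_1,s_2)$ in the regularity domain of the modified LST. The 2D case of this construction -- from the paragraph preceding (\ref{psi_tilde}) -- supplies the template, but in three dimensions one simultaneously tracks two coordinate resets at every busy-period end, and one must verify that the three contracted-time-scale constructions underlying the three factors are mutually consistent with the joint LST extracted from (\ref{psiKrec}). Once these probabilistic objects are in place, the factorisation of (\ref{psi_final3}) together with the uniqueness of multivariate LSTs yields the claimed independent-sum decomposition.
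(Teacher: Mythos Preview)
Your proposal is correct and follows essentially the same approach as the paper: the paper also defines the modified triple $(\tilde V^{(1),1},\tilde V^{(2),1},V^{(3)})$ by the reset-at-busy-period-end recursion you describe, derives its LST as the first factor of (\ref{psi_final3}) via the computation leading to (\ref{psi_tilde}), obtains the second factor from the virtual two-dimensional system with input LST $U_3^*(s_1,s_2)$ together with Lemma~\ref{Work conservation}, and identifies the third factor as the Pollaczek--Khinchine transform of the M/G/1 queue with service LST $U_2^*(s_1)$. The concerns you flag about preserving the ordering and the marginal of $V^{(3)}$ are handled in the paper exactly as in the two-dimensional template preceding (\ref{psi_tilde}), without additional machinery.
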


\begin{proof}
Consider the modified work process that evolves in steady state as

\[ (\tilde V^{(1),1},\tilde V^{(2),1},V^{(3)})\deq \left(\tilde V^{(1),1}+B^{(1)}-A,\tilde V^{(2),1}+B^{(2)}-A,V^{(3)}+B^{(3)}-A\right),\]
if $A< V^{(3)}+B^{(3)}$; and $(\tilde V^{(1),1},\tilde V^{(2),1},V^{(3)})=(0,0,0)$, else.

By similar computations as the ones leading to Formula $(\ref{psi_tilde})$, we obtain

\[ \tilde\psi(s_1,s_2,s_3)=(1-\rho_3)\frac{s_1+s_2+s_3-\lambda(1-U^*_3(s_1,s_2))}{s_1+s_2+s_3-\lambda(1-\phi(s_1,s_2,s_3))}.
\]

This is the first factor in $(\ref{psi_final3})$. For the second one, consider the following modified virtual workload process that evolves in steady state as

\[ (\tilde V^{(1),2}, V^{(2),2},0)\deq\left\{\begin{array}{ll}\left(\tilde V^{(1),2}+U^{(1),1}-A, V^{(2),2}+U^{(2),1}-A,0\right), &\mbox{if}\; A< V^{(2),2}+U^{(2),1}, \\
(0,0,0), &\mbox{if}\; A\geq V^{(2),2}+U^{(2),1},\end{array}\right.  \]
with $(U^{(1),1},U^{(2),1})$ the extra work vector in the first 2
queues  at the end of a busy period in queue 3.
Here we remove the excess workload in the virtual queue 1 at the end of the busy period in the virtual queue 2, which by Lemma \ref{Work conservation} is the same as in the original system.
In terms of LST's , this becomes

\[\tilde \psi_1(s_1,s_2)=\frac{1-\rho_1}{1-\rho_2}\frac{s_1+s_2-\lambda(1-U_2^*(s_1))}{s_1+s_2-\lambda(1-U^*_3(s_1,s_2))}.\]

 \noindent Finally, the third factor in (\ref{psi_final3}) is the Pollaczek-Khinchine representation of the steady-state workload in the M/G/1 queue with service time distributed as the extra work in queue 1 at the end of a busy period in queue 2. This ends the proof. \end{proof}
 These considerations can be iterated now for the general $K$-dimensional system.

\begin{corollary}

 The steady-state joint workload in the K systems decomposes into the independent sum

\begin{align*}(V^{(1)},\dots,V^{(K)}) &\deq (\tilde V^{(1),1},\dots,\tilde V^{(K-1),1},V^{(K)})+ (\tilde V^{(1),2},\dots,\tilde V^{(K-2),2}, V^{(K-1),2},0) \nonumber \\
 &+\dots+(\tilde V^{(1),K-1}, V^{(2),K-1},0,\dots,0) + (V^{(1),K},0,\dots,0),\nonumber\end{align*}
where the $j$th term  in the sum satisfies the identity in distribution (j=2,\dots,K):

\begin{align*} (\tilde V^{(1),j},&\tilde V^{(2),j},\dots,\tilde V^{(K-j),j}, V^{(K-j+1),j},0,\dots,0) \deq \left(\tilde V^{(1),j}+U^{(1),j-1}-A,\right.\nonumber\\
&\tilde V^{(2),j}+U^{(2),j-1}-A,\dots\left., V^{(K-j+1),j}-B^{(K-j+1)}-A,0,\dots,0 \right), \nonumber \\
&\mbox{if }\;A\leq V^{(K-j+1),j}-B^{(K-j+1)}, \end{align*}

and $(0,\dots,0)$ else. 
$U^{(i),j}$ is the extra workload in queue $i$ at the end of a busy period in queue $(K-j+1)$, for $i>K-j+1$.
\end{corollary}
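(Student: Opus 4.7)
The plan is to mirror the three-dimensional argument of the theorem just proved and proceed by induction on $K$, with the base case $K=3$ being that theorem (and $K=2$ being Theorem~\ref{work decomp}). The induction hypothesis asserts that the decomposition stated in the corollary holds for any $(K-1)$-dimensional system of parallel queues with ordered inputs; the task is to reduce the $K$-dimensional problem to such a virtual $(K-1)$-dimensional model by peeling off the longest-queue contribution.

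First I would rewrite the closed-form LST of Proposition~\ref{Prop work_K} using the Rouch\'e-root identity $\lambda(1-U_j^*(s_1,\ldots,s_{j-1}))=s_1+\cdots+s_{j-1}+S_j$ coming from (\ref{Rouche_relK}), exactly as in the passage from (\ref{Work_K}) to (\ref{psi_final3}) in the $K=3$ case. Each ratio $(S_j-s_j)/S_{j+1}$, together with the boundary factors $(S_K-s_K)/(\sum s_i-\lambda(1-\phi))$ and $s_1/S_2$, becomes a Pollaczek--Khinchine-type ratio expressed through the extra-work transforms $U_j^*$, giving a clean factorisation
\[
\psi(s_1,\ldots,s_K)=\tilde\psi^{(1)}(s_1,\ldots,s_K)\Bigl(\prod_{j=2}^{K-1}\tilde\psi^{(j)}(s_1,\ldots,s_{K-j+1})\Bigr)\tilde\psi^{(K)}(s_1),
\]
in which each factor already has the form of the LST of one summand of the proposed decomposition. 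The first factor $\tilde\psi^{(1)}$ is identified, by the same Lindley-type argument that produced (\ref{psi_tilde}), with the LST of the modified workload obtained from $(V^{(1)},\ldots,V^{(K)})$ by removing the excess in queues $1,\ldots,K-1$ at every end-of-busy-period epoch of queue $K$. The remaining product has precisely the structural form of the stationary workload LST of a $(K-1)$-dimensional model of parallel queues with ordered inputs whose joint LST is $U_K^*(s_1,\ldots,s_{K-1})$ and with Poisson arrivals of intensity $\lambda$ (the rate of the idle periods in queue $K$); applying the induction hypothesis to this virtual $(K-1)$-dimensional model splits the remaining product into its $K-1$ independent components in precisely the claimed form.

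The main obstacle is the iterated work-conservation step that makes the induction legitimate. Lemma~\ref{Work conservation} handled a single level; what is needed here is that for every $j$, the joint extra-work transform computed in the depth-$j$ virtual system (built by successively contracting busy periods in queues $K,K-1,\ldots,K-j+2$) coincides with the extra-work transform $U_{K-j+1}^*$ associated with the original $K$-dimensional model. This is established by a cascade of applications of the uniqueness part of Lemma~\ref{Rouche}: at each depth, the fixed-point equation of the form (\ref{U_star}) satisfied by the virtual extra-work transform can be rewritten, using the fixed-point equation one level deeper (namely the appropriate instance of (\ref{branching_K})), into the corresponding fixed-point identity for $U_{K-j+1}^*$ in the original system, and uniqueness of the Rouch\'e root then forces equality. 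Once work conservation is available at every level, the induction closes and the $K$ independent summands of the corollary may be read off term by term from the factorisation obtained in the first step.
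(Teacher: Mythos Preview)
Your proposal is correct and follows exactly the route the paper intends: the paper's own ``proof'' of the corollary consists solely of the sentence ``These considerations can be iterated now for the general $K$-dimensional system'' placed after the $K=3$ theorem, and your proposal spells out precisely what this iteration amounts to --- induction on $K$, peeling off the first factor via the modified-workload argument of (\ref{psi_tilde}), identifying the remaining product with the stationary LST of the virtual $(K-1)$-dimensional system with input $U_K^*$, and closing the loop by the iterated work-conservation argument generalising Lemma~\ref{Work conservation}. Your treatment is in fact more detailed than anything the paper provides.
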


\end{section}

\begin{section}{The general two-dimensional workload/reinsurance problem\label{Section boundary value}}

In this section we consider the general two-dimensional workload problem:
pairs of customers arrive simultaneously at two parallel queues $Q_1$ and $Q_2$ according to a Poisson($\lambda$) process,
the $n$th pair requiring service times $(B_n^{(1)},B_n^{(2)})$ with LST $\phi(s,t)$.
We are interested in the steady-state workload vector $(V^{(1)},V^{(2)})$ with LST $\psi(s,t)$.
By the duality that is exposed in Section~\ref{Section duality}, $\psi(s,t)$ also is the
Laplace transform (w.r.t.\ $u_1$ and $u_2$) of the probability
that
both portfolios of an insurance company with simultaneous claims
$(B_n^{(1)},B_n^{(2)})$,
with initial capital $u_1$ and $u_2$, will survive.

In Section~\ref{SectionK_2} we have determined $\psi(s,t)$ for the special case that
$\mathbb P(B^{(1)} \geq B^{(2)}) = 1$.
We now show how the general case -- $B_n^{(1)}$ and $B_n^{(2)}$ having an arbitrary
joint distribution --
has been solved in the literature (with the solution of that special case emerging as a degenerate
solution). We shall successively discuss the contributions of Baccelli \cite{Baccelli},
De Klein \cite{DeKlein} and Cohen \cite{Cohen92},
who have treated the two-dimensional workload problem  with simultaneous arrivals in increasing generality.
Starting point in all those three studies is the following functional equation for $\psi(s,t)$, which is derived by
studying the $2$-dimensional Markovian workload process during an infinitesimal amount of time $\Delta t$:
\begin{equation}
K(s,t) \psi(s,t) = t \psi_1(s) + s \psi_2(t), ~~~~~~~~~\mathcal Re ~ s,t \geq 0.
\label{funda}
\end{equation}
Here the so-called {\em kernel} $K(s,t)$ is given by:
\begin{equation}
K(s,t) := s + t - \lambda (1 - \phi(s,t)),
\label{kernel}
\end{equation}
and
\begin{equation}
\psi_1(s) := \mathbb E[{\rm e}^{-s V_1} (V_2=0)], ~~~\psi_2(t) := \mathbb E[{\rm e}^{-t V_2}(V_1=0)],
\end{equation}
with $(\cdot)$ denoting an indicator function.
\\

\noindent
\begin{remark}
In the special case of Section~\ref{SectionK_2}, with $\mathbb P(B^{(1)} \geq B^{(2)}) =1$,
one has $\psi_2(t) \equiv \mathbb P(V_1=0)$, because $V_2$ cannot be positive when $V_1=0$.
It then remains to find $\psi_1(s)$.
This is done by observing (cf.\ the appendix) that, for all $s$ with $\mathcal Re ~s > 0$, there is a unique zero $t(s)$
of the kernel, with $\mathcal Re ~t(s) > \mathcal Re ~(-s)$.
This immediately yields that $\psi_1(s) =  - \frac{s}{t(s)} \mathbb P(V_1=0)$, which is readily seen to be
in agreement with (\ref{psi_final}).

Equation (\ref{psi}), which was obtained by studying the workloads at arrival epochs
(i.e., the waiting times; by PASTA they have the same distribution as the steady-state workloads),
looks slightly different from (\ref{funda}), but using (\ref{intermed_rel}) it is readily seen that they are equivalent.
\end{remark}
\noindent
Globally speaking, the  essential steps in \cite{Baccelli,DeKlein,Cohen92} are the following.
\\
{\em Step 1}: find a suitable set of zeroes $(\hs,\hatt)$, with $\mathcal Re ~ \hs \geq 0$, $\mathcal Re ~\hatt \geq 0$,
of the kernel $K(s,t)$, i.e.,
$K(\hs,\hatt)=0$.
Because $\psi(s,t)$ is regular for all $(s,t)$ with
$\mathcal Re ~ s,t \geq 0$,
one must have  for all these zeroes:
\begin{equation}
\hatt \psi_1(\hs) = -  \hs \psi_2(\hatt).
\label{funda2}
\end{equation}
It is further observed that $\psi_1(s)$ is regular for $\mathcal Re ~s > 0$, continuous for $\mathcal Re  ~s \geq 0$,
and that
$\psi_2(t)$ is regular for $\mathcal Re ~t > 0$, continuous for $\mathcal Re ~t \geq 0$.
\\
{\em Step 2}: formulate a boundary value problem for $\psi_1(s)$ and $\psi_2(t)$.
There are various types of boundary value problems, like the Riemann and the Wiener-Hopf
boundary value problems. Typically, they ask to determine two functions $P_1(\cdot)$
and $P_2(\cdot)$, which satisfy a relation on a particular boundary $B$,
while $P_1(\cdot)$ is regular in the interior $B^+$ and $P_2(\cdot)$ is regular in the exterior $B^-$.
$B$ could be the unit circle (Riemann boundary value problem), or the imaginary axis
(Wiener-Hopf boundary value problem; $B^+$ now is the left-half plane).
We refer to Gakhov \cite{Gakhov} and Mushkelishvili \cite{Mushkelishvili} for excellent expositions
of such boundary value problems and their variants, like the boundary value
problem with a shift. The latter occurs in the approach of De Klein \cite{DeKlein}, see below.
\\
{\em Step 3}: solve the boundary value problem for $\psi_1(\cdot)$ and $\psi_2(\cdot)$ with boundary $B$.
If $B$ is a smooth closed contour that is not a circle,
the use of a conformal mapping from $B$ to the unit circle $C$ is required to arrive at a Riemann
boundary value problem for the unit circle, the solution of which can be found in \cite{Gakhov,Mushkelishvili}.
Thus one obtains $\psi_1(s)$ and $\psi_2(t)$ inside certain regions; subsequently, one may use analytic continuation to find
them in $\mathcal Re ~ s,t \geq 0$.
Finally, $\psi(s,t)$ follows from (\ref{funda}).
\begin{remark}
Application of the boundary value method in queueing theory
was pioneered by Fayolle and Iasnogorodski in \cite{FI}.
They used this method to analyze the joint queue length process in two coupled processors, viz., two $M/M/1$ queues
which operate at unit speeds when the other queue is not empty, but at different speeds when the other queue
is empty. The method was subsequently developed in \cite{BVP} for a large class of two-dimensional random walks;
various queueing applications were also discussed in \cite{BVP}.
See \cite{Cohen88} for a survey of the method in queueing theory, and see \cite{FM,Cohen92}
for two monographs which have further developed the theory of two-dimensional random walks.
Part IV of \cite{Cohen92} explores the analysis
of $N$-dimensional random walks with $N>2$. Results for $N>2$ are very limited,
and it seems fair to conclude that the boundary value method
is, apart from a few special cases, restricted to two-dimensional random walks.
\end{remark}
\begin{remark}
We strongly believe that the boundary value method also has a large potential in
the analysis of two-dimensional risk models.
Due to the duality between the reinsurance model and the $2$-queue model
with simultaneous arrivals,
the publications \cite{Baccelli,DeKlein,Cohen92}
are of immediate relevance to the reinsurance problem.
These publications seem unknown in the insurance community (see, e.g., Chan et al.\ \cite{Chan},
who pose the two-dimensional risk problem and stop at Equation  (\ref{funda}) (where \cite{Baccelli,DeKlein,Cohen92} begin).
They have remained largely unnoticed even in the queueing community,
perhaps because of their complexity and because \cite{Baccelli} and \cite{DeKlein} did not appear
in the open literature.
For these reasons,
we now successively expose the approaches in \cite{Baccelli}, \cite{DeKlein} and \cite{Cohen92} at some length.
\end{remark}
{\em The approach of Baccelli \cite{Baccelli}}
\\
Baccelli \cite{Baccelli} restricts himself to the case of exchangeable $(B^{(1)},B^{(2)})$, i.e.,
$\mathbb P(B^{(1)}<x,B^{(2)} <y)$ $=
\mathbb P(B^{(1)}<y,B^{(2)} <x)$, or equivalently, $\phi(s,t) = \phi(t,s)$.
We briefly review the three steps mentioned above.
\\
{\em Step 1} in \cite{Baccelli} is as follows. Consider zero pairs $(\hs,\hatt) = (g+iu,g-iu)$ of kernel $K(s,t)$, with $u \in \mathcal R$
and with $g = g(u)$ the unique zero in $\mathcal Re ~g \geq 0$ of
\[
2g = \lambda(1 - \phi(g+iu,g-iu)) .
\]
Using the exchangeability, it can be shown that this unique zero is real and non-negative, while $g(-u) = g(u), ~ u \in \mathcal R$.
\\
{\em Step 2}.
Consider the arc $A = \{s: s=g(u) + iu, ~ u \in R\}$, with $g(u)$ the zero defined above.
This is a smooth arc, located in the right half-plane.
Baccelli finds a conformal mapping $p(\cdot)$ of
the interior $C^+$ of the unit circle $C$ onto
$A^+$, the `interior' of $A$ located on the right of $A$,
and a conformal mapping
$q(\cdot)$ of $C^-$, the exterior of the unit circle,
onto $A^+$; their limits on $C$ are denoted by $p^+(z)$ and $q^-(z)$, which are each other's complex conjugates
because of the exchangeability.
Noticing that $p^+(-1)=q^-(-1)=0$, he
multiplies both sides of (\ref{funda2}) with $1+z$. This yields (divide both sides of (\ref{funda2}) by $\hs \hatt$):
\begin{equation}
(1+z) \frac{\psi_1(p^+(z))}{p^+(z)} = - (1+z) \frac{\psi_2(q^-(z))}{q^-(z)} , ~~~~~~~~|z| = 1.
\label{funda3}
\end{equation}
Because of the regularity properties of the conformal mappings and of $\psi_1(s)$ and $\psi_2(t)$,
$\mathcal Re ~s,t > 0$, one now arrives at a simple boundary value problem:
we have (\ref{funda3})  for $|z|=1$, while
the left-hand side of (\ref{funda3}) is regular for $|z| <1$, and the right-hand side is regular for
$|z| >1$.
\\
{\em Step 3}.
The solution of this problem immediately follows from Liouville's theorem, cf.\ \cite{Titchmarsh} p. 85:
\[
\psi_1(p(z)) = \frac{\gamma + \delta z}{1+z} p(z), ~~ |z| <1, ~~~
\psi_2(q(z)) = \frac{\gamma + \delta z}{1+z} q(z), ~~~~~~ |z| >1.
\]
Baccelli \cite{Baccelli} shows that $\gamma = - \delta$, and determines the remaining unknown constant
$\delta$ by normalization.
Having thus determined $\psi_1(s)$ for $s \in A^+$, he uses analytic continuation to obtain $\psi_1(s)$
in the whole right half-plane; similarly for $\psi_2(t)$.
Finally, substitution in (\ref{funda}) determines $\psi(s,t)$.
\\

\noindent
{\em The approach of De Klein \cite{DeKlein}}
\\
De Klein \cite{DeKlein}, pp. 119-168, studies the general case of an arbitrary joint distribution of $B^{(1)}$ and $B^{(2)}$.
\\
{\em Step 1} in \cite{DeKlein} is as follows.
He considers the same zero pairs as Baccelli (also suggesting another set of zero pairs on p. 132).
$g(u)$ is no longer necessarily real, but for all real $u$ there still is a unique zero $g(u)$.
\\
{\em Step 2}.
De Klein subsequently considers the simple, smooth arcs $A_1 = \{s: s = g(u) + iu, ~ u \in R\}$ and
$A_2 = \{t: t = g(u) - iu, ~ u \in R\}$ in the right half-plane.
Notice that $A_1$ and $A_2$ are each other's complex conjugates in the exchangeable case of Baccelli, but not in De Klein's more general case.
De Klein now uses the (unique) one-to-one mapping $t = \omega_2(s)$ from $A_1$ onto $A_2$ (with inverse
$\omega_1(t)$) determined by the fact that, $\forall s \in A_1$, $(s,\omega_2(s))$ is a zero pair of the kernel.
Similarly, $\forall  t \in A_2$, $(\omega_1(t),t)$ is a zero pair.
Hence the following must hold:

\begin{equation}
\psi_1(\omega_1(t)) = - \frac{\omega_1(t)}{t} \psi_2(t), ~~~t \in A_2.
\label{funda4}
\end{equation}
In addition, one has the regularity properties of the functions $\psi_1(\cdot)$ and $\psi_2(\cdot)$
which were listed below (\ref{funda2}).
Determination of functions $\psi_1(\cdot)$ and $\psi_2(\cdot)$
with these regularity properties and satisfying (\ref{funda4})
is a so-called {\em shift problem}, a boundary value problem with a shift (cf.\ Sections 17 and 18  of \cite{Gakhov}).
\\
{\em Step 3}.
Gakhov \cite{Gakhov} mentions two methods to solve such problems: (i) reduce the problem to a Fredholm integral equation
of the second kind, and (ii) reduce the problem to an ordinary Riemann boundary value problem, by means
of conformal mappings.
De Klein \cite{DeKlein} explores the first method in Section II.4.2 and the second in Section II.4.3.
We concentrate on the first method.
De Klein first translates the shift problem to one on a finite smooth {\em closed} contour,
via the conformal mapping $\zeta(z) = \frac{1-z}{1+z}$ (with inverse $z(\zeta) = \frac{1-\zeta}{1+\zeta}$) that maps $A_i$ onto smooth closed contours $T_i$, $i=1,2$;
he then applies Gakhov's first method. He obtains the following Fredholm integral equation of the second kind for an unknown function $G_1(\cdot)$ -- which up to a constant
equals ${\rm log} \{\psi_1(z(\cdot))\}$:
\begin{equation}
G_1(p_1) = \frac{1}{2 \pi i} \int_{T_1} G_1(v_1)
[\frac{1}{v_1 - p_1} - \frac{\nu_2'(v_1)}{\nu_2(v_1) - \nu_2(p_1)} - \frac{1}{v_1 - c_1}] {\rm d} v_1
+ H_1(p_1), ~~~ p_1 \in T_1,
\end{equation}
with $H_1(\cdot)$ some known function,
$c_1$ some point in the interior of $T_1$,
and $\nu_2(v_1)$ $= \zeta(\omega_2(z(v_1)))$, $v_1 \in T_1$.
After having solved the integral equation (which can be done numerically in an efficient way,
as shown by De Klein), one obtains $\psi_1(s)$ for $s \in T_1$, and then $\psi_2(t)$ for $t \in T_2$ via (\ref{funda2}).
The regularity of $\psi_1(s)$ in the interior $T_1^+$ subsequently allows one to
obtain $\psi_1(s)$, $s \in T_1^+$, as a Cauchy integral; similarly for $\psi_2(t)$, $t \in T_2^+$.
By analytic continuation, $\psi_1(s)$ and $\psi_2(t)$ are then also uniquely determined in $\mathcal Re ~s \geq 0$
and $\mathcal Re ~t \geq 0$, respectively. Finally, $\psi(s,t)$ again follows from (\ref{funda}).

De Klein also explores Gakhov's second method to treat the shift problem.
However, this reduction to a Riemann boundary value problem requires a conformal mapping
that itself must be determined by solving another Fredholm integral equation of the second kind.
In Chapter II.6 he extensively investigates the numerical solution of both integral equations
by means of the Nystrom or quadrature method.
He obtains, a.o., accurate results for the mean sojourn time of a customer pair, viz., the time
until both customers of a pair have left the system.
\\

\noindent
{\em The approach of Cohen \cite{Cohen92}}
\\
Cohen \cite{Cohen92}, Part III, considers
a very general class of two-dimensional workload processes. Basically, he combines the model with simultaneous arrivals
and the coupled processors model. The two servers have speeds $r_1$ and $r_2$ if they are both non-idle,
and speeds $r^{(1)}$ and $r^{(2)}$ when the other server is idle.
Furthermore, he also allows the possibility of different joint service requirement distributions
if a customer pair arrives when at least one of the servers is idle.
Finally, he explicitly allows single arrivals next to simultaneous arrivals
(cf.\ also \cite{Badescu}).
Much of Part III of \cite{Cohen92} is devoted to a detailed study
of the ergodicity conditions and of the so-called hitting point process and hitting point identity of the workload process,
hitting point referring to the first entrance point of one of the axes.

In Chapter III.4 he determines the steady-state joint workload distribution for a variety of cases.
For us, the most relevant cases are treated in Sections III.4.9 and III.4.10.
Section III.4.9 treats the model of De Klein \cite{DeKlein}.
The same zero pairs are used ({\em Step 1}), and the same smooth closed contours $T_1$ and $T_2$;
Cohen subsequently uses Gakhov's second method to arrive at a Riemann boundary value problem ({\em Step 2}).
That boundary value problem actually is so simple that it can be solved straightforwardly by applying Liouville's theorem, cf. Baccelli's method above ({\em Step 3});
however, a conformal mapping is required, which is obtained as the solution of another Fredholm integral equation of the second kind.
A nice feature in Section III.4.9 is that $\psi_1(s)$ and $\psi_2(t)$, after normalization, are
expressed as LST's of waiting time or workload distributions of special $M/G/1$ queues
(which are related to hitting points).

Section III.4.10 treats the model of De Klein with the additional feature that there is coupling of the servers, of a rather special form:
$\frac{r_1}{r^{(1)}} + \frac{r_2}{r^{(2)}} = 1$.
This does not change the kernel $K(s,t)$ (which only refers to the interior of the state space, with both servers active),
so the same zero pairs and contours can still be used.
However, it does change the right-hand side of (\ref{funda}), and hence
a slightly different Riemann boundary value problem must be solved.
\begin{remark}
It should be observed that Baccelli \cite{Baccelli}, De Klein \cite{DeKlein} and Cohen \cite{Cohen92} all also
solve the more complicated {\em transient} problem, of determining the joint time-dependent distribution
of the two workloads.
\end{remark}
\end{section}

\begin{section}{Conclusions and future work}\label{section comments}

We have studied a multivariate queueing system, which is shown to correspond to a dual risk
process with multiple lines of insurance that receive coupled claims. We find the
LST of the multivariate workload distribution in the case in which the service requirements
are ordered with probability one. Duality then yields the Laplace transform
of the survival probabilities.
For general service requirement (resp. claim size) vectors the workload (resp. ruin) problem can be solved
in the two-dimensional case, by solving a Riemann boundary value problem.
For dimension $K>2$, the problem seems analytically intractable in its full generality.
That raises the need for approximations and asymptotics.
It would in particular be interesting to obtain explicit multi-dimensional tail asymptotics of workloads
and ruin probabilities, both for light-tailed and heavy-tailed service requirements (or claim sizes).
Even for $K=2$ queues, this is already quite challenging. Moreover, a wide range of different cases
must be studied, giving rise to quite different techniques and results.
Therefore we intend to devote a separate study to tail asymptotics.

\end{section}

\begin{section}{Appendix\label{Appendix}}

\begin{lem}[Rouch\'{e} zero]\label{Rouche}For every s with $\mathcal Re\;s>0$  there exists a unique $t=t(s)$ with $\mathcal Re\;t(s)>\mathcal Re\;(-s)$, that satisfies the identity
\[ \lambda \phi(s,t)=\lambda-(s+t). \] Moreover the function: $s\rightarrow t(s)$ is analytic in $\mathcal Re\;s>0$.\end{lem}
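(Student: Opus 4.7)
The plan is to recast the equation as a one-complex-variable fixed-point problem to which Banach's contraction principle applies. Introduce $u:=s+t$; using Assumption~2, write
\[
\phi(s,t) \;=\; \mean e^{-s(B^{(1)}-B^{(2)})-(s+t)B^{(2)}} \;=:\; \tilde\phi(s,u),
\]
where $D:=B^{(1)}-B^{(2)}\geq 0$ a.s. The equation $\lambda\phi(s,t)=\lambda-(s+t)$ becomes
\[
u \;=\; g_s(u) \;:=\; \lambda\bigl(1-\tilde\phi(s,u)\bigr),
\]
and the required condition $\mathcal Re\;t(s)>-\mathcal Re\;s$ translates into $\mathcal Re\;u>0$.

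First I would verify that $g_s$ sends the closed half-plane $H:=\{\mathcal Re\;u\geq 0\}$ into itself for every $s\in H$: since $|\tilde\phi(s,u)|\leq \mean e^{-\mathcal Re\;s\cdot D-\mathcal Re\;u\cdot B^{(2)}}\leq 1$ on $H\times H$, we have $\mathcal Re\;g_s(u)=\lambda(1-\mathcal Re\;\tilde\phi(s,u))\geq 0$. Next I would show that $g_s$ is a strict contraction. Differentiating under the expectation,
\[
|\partial_u \tilde\phi(s,u)| \;=\; \bigl|\mean[B^{(2)} e^{-sD-uB^{(2)}}]\bigr| \;\leq\; \mean B^{(2)}
\]
throughout $H\times H$. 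Because $H$ is convex, this gives Lipschitz constant $\lambda\mean B^{(2)} = \rho_2\leq\rho_1<1$, where $\rho_2\leq\rho_1$ follows from $B^{(2)}\leq B^{(1)}$ a.s.\ and $\rho_1<1$ is the stability hypothesis. Banach's fixed-point theorem on the complete metric space $H$ yields a unique $u^\ast(s)\in H$ with $g_s(u^\ast(s))=u^\ast(s)$, and I set $t(s):=u^\ast(s)-s$.

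To promote the inequality to $\mathcal Re\;u^\ast(s)>0$, note that if $u^\ast(s)=0$ then $\tilde\phi(s,0)=\mean e^{-sD}=1$; for $\mathcal Re\;s>0$ this forces $D=0$ a.s., contradicting the standing assumption $\prb(B^{(1)}>B^{(2)})>0$ isolated just before Proposition~\ref{Prop_Rouche}. Analyticity of $s\mapsto t(s)$ on $\{\mathcal Re\;s>0\}$ follows from the holomorphic implicit function theorem applied to $F(s,u):=u-\lambda(1-\tilde\phi(s,u))$: the partial derivative $|\partial_u F|\geq 1-\lambda|\partial_u\tilde\phi|\geq 1-\rho_2>0$ everywhere, and $F$ is jointly holomorphic on an open neighbourhood of $\{\mathcal Re\;s>0\}\times\{\mathcal Re\;u>0\}$, so local analytic solutions exist and are glued by Banach uniqueness into one analytic function.

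The main subtle point -- rather than a true obstacle -- is two-fold: establishing the contraction estimate on the \emph{closed} half-plane (so that Banach's theorem produces the fixed point in the required region), and upgrading to the strict inequality $\mathcal Re\;u^\ast(s)>0$ demanded by the lemma. An alternative proof more faithful to the name ``Rouché zero'' would instead compare $F(s,\cdot)$ with the identity on the contour $\{\mathcal Re\;u=\varepsilon\}$ closed by a large semicircle in $\{\mathcal Re\;u\geq\varepsilon\}$, use the bound $|\lambda(1-\tilde\phi(s,u))|\leq 2\lambda$ on the contour together with $|u|\to\infty$ on the semicircle to conclude that both functions have a single zero inside, and let $\varepsilon\downarrow 0$; this route works but involves more delicate bookkeeping near the origin than the contraction argument.
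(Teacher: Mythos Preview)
Your approach is correct in spirit and genuinely different from the paper's. The paper applies Rouch\'e's theorem directly: for fixed $s$ with $\mathcal Re\,s>0$ it compares $\lambda\tilde\phi(s,\cdot)$ with $f(u):=\lambda-u$ (where $u=s+t$) on the contour consisting of the segment $\{iw:|w|\le R\}$ together with a semicircle of radius $R>2\lambda$ in the right $u$-half-plane, shows $|\lambda\tilde\phi|<|f|$ on this contour (the strict inequality on the imaginary axis using $\prb(B^{(1)}>B^{(2)})>0$), and concludes that the equation has exactly one root inside since $f$ has the single zero $u=\lambda$ there. Your Banach contraction argument is arguably cleaner and yields analyticity almost for free via the holomorphic implicit function theorem; the price is that it needs the stability hypothesis $\rho_2\le\rho_1<1$ for the Lipschitz constant, which the Rouch\'e route does not use.

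There is one small gap in your ``upgrade to strict'' step: you exclude $u^\ast(s)=0$, but the lemma asks for $\mathcal Re\,u^\ast(s)>0$, so you must also rule out $u^\ast(s)=iw$ with $w\neq 0$. This is immediate: if $iw=\lambda(1-\tilde\phi(s,iw))$ then taking real parts gives $\mathcal Re\,\tilde\phi(s,iw)=1$, which together with $|\tilde\phi|\le 1$ forces $\tilde\phi(s,iw)=1$, and then the imaginary part gives $w=0$, reducing to the case you already handled. Alternatively, note that $|\tilde\phi(s,u)|\le \mean e^{-(\mathcal Re\,s)D}<1$ uniformly for $u\in H$ under $\prb(D>0)>0$, so in fact $g_s$ maps $H$ into $\{\mathcal Re\,u\ge \lambda(1-\mean e^{-(\mathcal Re\,s)D})\}\subset\{\mathcal Re\,u>0\}$ and the fixed point lies in the open half-plane automatically.
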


\begin{proof} For fixed $s$ with $\mathcal Re\; s>0$, let $f(s+t):=\lambda-(s+t)$. Consider in the right half-plane the contour $\mathcal C$ made up from the semicircle with center at $-s$
and radius $R>2\lambda$ together with the line segment $I:=\left\{-s+iw|w\in[-R,R]\right\}$. We show that on this contour $|\lambda\phi(s,t)|<|f(s+t)|$. We can bound $|\phi(s,t)|$ by

\[ \lambda \left|\phi(s,t)\right|=\lambda |\tilde\phi(s,s+t)| \leq \lambda  \mathbb E e^{- Re\;s(B^{(1)}-B^{(2)})- Re\;(t+s)B^{(2)}}<\lambda. \]
This holds everywhere in the domain of $\phi(s,t)$ if $B^{(1)}-B^{(2)}$ has positive mass on $(0,\infty)$.

Now we bound $|f(s+t)|$. When $(s+t)$ is on the semicircle (i.e $|s+t|=R>2\lambda$), apply the triangle inequality to the triangle with vertices at $0,\lambda,s+t$, to find $|\lambda-s-t|>\lambda$.
When $(s+t)\in I$, by a similar argument we obtain $|\lambda-s-t|\geq \lambda$, with equality only when $s+t=0$. Hence on the contour $\mathcal C$, $|f(s+t)|\geq \lambda$.
We can now use Rouch\'e's theorem to conclude that the equation $ \lambda \phi(s,t)=\lambda-(s+t)$ has a unique solution $t(s)$ inside $\mathcal C$,
because the polynomial $f(s+t)$ has only one zero inside $\mathcal C$, at $\lambda$. Letting $R\rightarrow \infty$, proves the assertion. \end{proof}

\vspace{0.2cm}

\end{section}

\end{document}